\newtheorem*{theomain}{Main Theorem}
\newtheorem{thm}{Theorem}
\newtheorem{prop}[thm]{Proposition}
\newtheorem{df}[thm]{Defintion}
\newtheorem{lem}[thm]{Lemma}
\newtheorem{coro}[thm]{Corollary}
\newtheorem*{cor}{Corollary}
\newtheorem{ques}[thm]{Question}
\newtheorem{rem}[thm]{Remark}
\def\N{\mathbb{N}}
\def\Z{\mathbb{Z}}
\def\N{\mathbb{N}}
\def\R{\mathbb{R}}
\def\FF{\mathcal{F}}
\def\MM{\mathcal{M}}
\def\wdim{\text{\rm Widim}}
\def\mdim{\text{\rm mdim}}
\def\supp{\text{\rm supp}}
\def\stab{\text{\rm stabdim}}
\def\diam{\text{\rm diam}}
\def\AA{\mathcal{A}}
\def\BB{\mathcal{B}}
\numberwithin{equation}{section}
\title{Topological mean dimension of induced systems }
	\author{David Burguet}
	\address
	{Sorbonne Universite, LPSM, 75005 Paris, France}
	\email{david.burguet@upmc.fr}
	\author{Ruxi Shi}
\address
	{Sorbonne Universite, LPSM, 75005 Paris, France}
\email{ruxi.shi@upmc.fr}
\subjclass[2020]{}
\keywords{}
\begin{document}

	\begin{abstract}
For a topological system with positive topological entropy, we show that the induced transformation on the set  of probability measures endowed with the weak-$*$ topology has infinite topological mean dimension.  We also estimate the rate of divergence of the entropy with respect to the Wasserstein distance when the scale goes to zero. 

	\end{abstract}

	\maketitle
\section{Introduction}	

Let $(X, T)$ be a topological dynamical system, i.e. $X$ is a compact  metrizable space and $T: X\to X$ is a continuous map.  Let $\MM(X)$ be the space of Borel probability measures on $X$ endowed with the weak-$*$ topology. Then the push-forward map $T_*:  \mu \mapsto \mu\circ T^{-1}$ is a continuous map on $\MM(X) .$ 
The topological system $(\MM(X), T_*)$ is called the \emph{induced sytem} of $(X,T)$ on probability measures. 

W. Bauer and K. Sigmund \cite{Sigmun} have investigated which  dynamical properties of $(\MM(X),T_*)$ are inherited from $(X,T)$. In particular they obsversed that $h_{top}(T_*)$ is infinite when $h_{top}(T)$ is positive. Later  B. Weiss and E. Glasner \cite{glasner1995quasi}  showed that if $(X,T)$ has zero topogical entropy, then so does $(\MM(X),T_*)$. Therefore we have the following equivalences :
$$h_{top}(T)>0\Leftrightarrow h_{top}(T_*)>0 \Leftrightarrow h_{top}(T_*)=\infty.$$

M. Gromov \cite{G} has introduced a new topological invariant of dynamical systems called {\it  (topological) mean  dimension} 
	as a dynamical analogue of topological covering dimension. In some sense the mean dimension $\mdim(T)$ estimates the number of parameters that we need to describe the system $(X,T)$ per unity  of time.   A dynamical system of finite entropy or finite dimension always has zero mean dimension. The mean dimension of the shift on $d$-cubes $([0,1]^d)^\mathbb{Z}$ is equal to $ d$ (\cite[Proposition 3.3]{LindenstraussWeiss2000MeanTopologicalDimension}). Computing the mean dimension is in general difficult and it is only known for few systems (\cite{tsukamoto2019mean, burguet2021mean}).  In the present paper  we investigate the mean dimension of the induced system $(\MM(X) ,T_*)$. Our main result states as below :

\begin{theomain}
For any topological system $(X,T)$ with positive topological entropy, the induced system $(\MM(X), T_*)$ has infinite topological mean dimension. Therefore, $$h_{top}(T)>0\Leftrightarrow \mdim(T_*)>0 \Leftrightarrow \mdim(T_*)=\infty.$$
\end{theomain}


Given a distance $d$ on $X$,  Lindenstrauss and Weiss \cite{LindenstraussWeiss2000MeanTopologicalDimension} has introduced the associated metric mean dimension $\mdim(X,d,T)$ as  the dynamical quantity corresponding  to the Minkowski dimension. They showed inter alia that the metric mean dimension is always larger than or equal to the associated  mean topological dimension. It is conjectured that the topological mean dimension is the infimum of the metric mean dimensions with respect to distances inducing the topology (this conjecture is known to hold true in some cases, see \cite{lindenstrauss2019double}).  A dynamical system of finite entropy has zero metric mean dimension for any metric compatible with the topology (\cite{LindenstraussWeiss2000MeanTopologicalDimension}). Together with the Main Theorem, we have the following corollary.

\begin{cor}Given  a topological system $(X,T)$  we have the following equivalences for any metric $D$ on $\MM(X)$ compatible with the weak-$*$ topology:
$$h_{top}(T)>0\Leftrightarrow \mdim(\MM(X),D,T_*)>0 \Leftrightarrow \mdim(\MM(X),D,T_*)=\infty.$$
\end{cor}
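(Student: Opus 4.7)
The plan is to derive the corollary as a direct consequence of the Main Theorem together with three facts already quoted in the introduction: (i) for any compatible metric $D$ on $\MM(X)$ the Lindenstrauss--Weiss inequality $\mdim(T_*)\le \mdim(\MM(X),D,T_*)$ holds; (ii) any dynamical system of finite topological entropy has zero metric mean dimension with respect to any compatible metric; and (iii) the Bauer--Sigmund--Glasner--Weiss equivalence $h_{top}(T)>0\Leftrightarrow h_{top}(T_*)=\infty$ recalled at the beginning of the introduction.

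For the implication $h_{top}(T)>0\Rightarrow \mdim(\MM(X),D,T_*)=\infty$ I would simply apply the Main Theorem to obtain $\mdim(T_*)=\infty$ and then invoke (i) to transfer this infinity over to the metric mean dimension. This argument works simultaneously for every metric $D$ on $\MM(X)$ compatible with the weak-$*$ topology, so one actually gets the quantitatively stronger statement that the metric mean dimension equals $+\infty$ for \emph{every} such metric.

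For the converse, assume $\mdim(\MM(X),D,T_*)>0$ for some compatible metric $D$. Applying the contrapositive of (ii) to the induced system $(\MM(X),T_*)$ then yields $h_{top}(T_*)=\infty$, and (iii) transfers positivity of entropy back to $(X,T)$, i.e.\ $h_{top}(T)>0$. Combining this with the forward direction closes the three-way chain of equivalences displayed in the statement.

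Since each step in this argument is either a citation or the Main Theorem itself, I do not foresee any genuine obstacle. The only point worth a line of verification is that facts (i) and (ii) apply to $(\MM(X),T_*)$ with an \emph{arbitrary} compatible metric $D$: both are established by Lindenstrauss--Weiss for every topological dynamical system on a compact metrizable space, and $(\MM(X),T_*)$ is such a system, so no extra ingredient is required.
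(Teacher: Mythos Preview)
Your proposal is correct and matches the paper's own reasoning: the paper does not spell out a proof of the Corollary but simply observes that it follows from the Main Theorem together with the Lindenstrauss--Weiss inequality $\mdim(T_*)\le \underline{\mdim}(\MM(X),D,T_*)$ and the fact (also from Lindenstrauss--Weiss) that finite topological entropy forces zero metric mean dimension, the latter being applied to $(\MM(X),T_*)$ via the Glasner--Weiss implication $h_{top}(T)=0\Rightarrow h_{top}(T_*)=0$. Your three ingredients (i)--(iii) are exactly these, so the argument is the intended one.
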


For an integer $a\geq 2$ we let $T_a$ be the $\times a$-map on the circle.  Kloeckner  \cite{Kl} proved  the following lower bound on the metric  mean dimension  of $T_a$ for the $p^{\rm th}$-Wasserstein metric:
$$\underline{\mdim}(\MM(X), W_p, (T_a)_*)\geq p(a-1).$$
In fact, the metric mean dimension of the $n$-power  of a system is always larger than or equal to $n$ times the metric mean dimension of the system (see  Inequality (\ref{power metric})). But  the $n$-power of $(T_a)_*$ is just $(T_{a^n})_*$, therefore 
without referring to our new results one can show directly from Kloeckner's theorem that the metric mean dimension of the induced map $(T_a)_*$ with respect to  the $p^{\rm th}$-Wasserstein metric is in fact infinite:
\begin{align*}
\underline{\mdim}(\MM(X), W_p, (T_a)_*)&\geq \frac{1}{n}\cdot\underline{\mdim}(\MM(X), W_p, (T_a)_*^n),\\
&\geq \frac{1}{n}\cdot\underline{\mdim}(\MM(X), W_p, (T_{a^n})_*),\\
&\geq p\cdot\frac{a^n-1}{n}\xrightarrow{n}\infty.
\end{align*}
Therefore, Kloeckner's theorem can be regarded as a special case of our Corollary.

For a topological system $(X,T)$ with positive entropy, the Main Theorem implies the mean dimension \footnote{See the definition in Section \ref{meansec}.} of $T_*$ with the scale $\epsilon$, denoted by $\mdim_{W_1}(T_*, \epsilon)$,  goes to infinity as $\epsilon$ tends to zero. The rate of divergence of $\mdim_{W_1}( T_*,  \epsilon)$ may be precised as follows:
\begin{thm}\label{thm:rate m}
Suppose $(X,T)$ is of positive entropy and Lipschitz. Then there exist $C, \alpha>0$ such that we have 
$$
 \forall 0<\epsilon<1,\ \ \mdim_{W_1}(T_*,  \epsilon)\ge {C \epsilon^{-\alpha}}.
$$
\end{thm}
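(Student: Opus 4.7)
\smallskip
\noindent\textbf{Proof plan.} The plan is to construct, for every small $\epsilon>0$ and every sufficiently large $n$, a continuous embedding of a high-dimensional cube $[0,1]^{D_n}$ into the Bowen--Wasserstein space $(\MM(X),\bar W_{1,n})$, where $\bar W_{1,n}(\mu,\nu):=\max_{0\le j<n}W_1(T_*^j\mu,T_*^j\nu)$, that is a widim-embedding at scale $\epsilon$ with $D_n$ growing linearly in $n$ at rate $C\epsilon^{-\alpha}$. Since $\wdim_\epsilon([0,1]^D,\ell^\infty)=D$ for $\epsilon<1$, the claimed lower bound on $\mdim_{W_1}(T_*,\epsilon)$ then follows from the widim-monotonicity under bi-Lipschitz embeddings.

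The first ingredient is positive topological entropy, which yields $h,\delta_0>0$ and, for each large $n$, an $(n,\delta_0)$-separated set $E_n\subset X$ with $|E_n|\ge e^{hn}$. The Lipschitz hypothesis on $T$, together with compactness of $X$, ensures finite upper box dimension, and thus an $\epsilon$-covering number $N_\epsilon$ of at most polynomial growth in $1/\epsilon$. A double pigeonhole---first on the $\epsilon$-cover of $X$, then on the ``first separation time'' $j(x,y)\in\{0,\ldots,n-1\}$---produces $M\gtrsim|E_n|/(nN_\epsilon)$ pairs $(x_i,y_i)$ lying in a common $\epsilon$-ball and separating at a common time $j^*$, so that $d(T^{j^*}x_i,T^{j^*}y_i)\ge\delta_0$ for each $i$. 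One then defines the local cube embedding
$$
\Phi(\vec t):=\frac1M\sum_{i=1}^M\bigl(t_i\delta_{y_i}+(1-t_i)\delta_{x_i}\bigr),\qquad \vec t\in[0,1]^M.
$$

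The main technical step, and the principal obstacle of the proof, is the lower bound
$$
\bar W_{1,n}(\Phi(\vec t),\Phi(\vec s))\gtrsim (\delta_0/M)\,\|\vec t-\vec s\|_\infty.
$$
Via Kantorovich duality applied at time $j^*$, this reduces to producing, for each pair, a 1-Lipschitz test function on $X$ that separates the image $\{T^{j^*}x_i,T^{j^*}y_i\}$ of that pair while remaining small on the images of the remaining pairs. The subtlety is that Wasserstein optimal transport may cheaply reroute mass between distinct pairs; one must therefore arrange that at time $j^*$ the pair images $\{T^{j^*}x_i,T^{j^*}y_i\}_i$ are spatially well-separated from one another, which is achieved by exploiting the Lipschitz expansion rate of $T$ together with the refined separation structure of $E_n$.

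Finally, to obtain a widim that grows linearly in $n$ (rather than bounded in $n$), one chains $k:=\lfloor n/n_0\rfloor$ such local embeddings across time blocks of length $n_0$, each block using pairs whose divergence is visible only within that block. Balancing parameters with $M\asymp \delta_0/\epsilon$ (so that each unit coordinate change produces a $\bar W_{1,n}$-shift of size at least $\epsilon$) and $n_0\asymp\log(N_\epsilon/\epsilon)\lesssim \log(1/\epsilon)$, the total cube dimension is $D_n\asymp nM/n_0$, yielding
$$
\mdim_{W_1}(T_*,\epsilon)\ \geq\ \lim_{n\to\infty}\frac{D_n}{n}\ \asymp\ \frac{M}{n_0}\ \gtrsim\ \frac{1}{\epsilon\,\log(1/\epsilon)},
$$
which exceeds $C\epsilon^{-\alpha}$ for any $\alpha\in(0,1)$ and all sufficiently small $\epsilon$.
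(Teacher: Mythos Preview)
Your proposal has a genuine gap at precisely the point you yourself flag as ``the principal obstacle.'' The lower bound
\[
\bar W_{1,n}(\Phi(\vec t),\Phi(\vec s))\ \gtrsim\ \frac{\delta_0}{M}\,\|\vec t-\vec s\|_\infty
\]
is not established, and your suggested fix --- arranging that the pair images $\{T^{j^*}x_i,T^{j^*}y_i\}$ for different $i$ are spatially well-separated at the common time $j^*$ --- is not achievable from the pigeonhole data alone. Your construction forces all $x_i,y_i$ into a single $\epsilon$-ball, so after $j^*$ iterates of a $K$-Lipschitz map they all sit in a ball of radius $K^{j^*}\epsilon$; nothing prevents all the $T^{j^*}x_i$ from clustering together and all the $T^{j^*}y_i$ from clustering together. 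In that situation $T_*^{j^*}\Phi(\vec t)$ depends, up to small error, only on the \emph{sum} $\sum_i t_i$, so the map $\Phi$ collapses to something essentially one-dimensional at time $j^*$ and cannot certify an $M$-dimensional widim. More generally, a single common separation time can distinguish at most one coordinate direction; to get $M$ independent directions you need $M$ genuinely independent separation events, and an $(n,\delta_0)$-separated set gives no such combinatorial control. The chaining over time blocks in step~(6) inherits the same problem and is in any case left unspecified (how are the pairs from different blocks combined into a single probability measure?).

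The paper resolves exactly this difficulty by replacing the raw separated set with an \emph{independence set}: an IE-pair $(U_0,U_1)$ with disjoint closures and a set $I\subset\mathbb N$ of positive lower density so that \emph{every} pattern $\zeta:J\to\{0,1\}$ on every finite $J\subset I$ is realised by some orbit. This is the Kerr--Li combinatorial structure, and it is what makes the different ``coordinates'' truly independent: for each block of $m$ consecutive times one gets $2^{q_m}$ itineraries, and the multi-affine embedding of the generalized cube $\Delta_{2^{q_m}}^{\,\sharp I_n^m}$ into the simplex of convex combinations of the corresponding Dirac masses then has the property that any open cover of small $W_n^m$-diameter is \emph{separating} in the sense of the generalized Lebesgue lemma (Lemma~\ref{Leb}), forcing order at least $2^{q_m}\sharp I_n^m$. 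The Lipschitz hypothesis enters only to make $\gamma_m$ (hence $\epsilon_m$) explicitly exponential in $m$, which converts the bound $\mdim_W(T_*,\epsilon_m)\geq 2^{q_m-1}\underline d(I)/m$ into the power law $C\epsilon^{-\alpha}$. Your outline does not invoke any of this machinery, and without it the crucial no-cross-transport estimate simply does not hold.
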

In general, we can use the modulus of continuity of $T$ to estimate the rate of divergence of $\mdim(\MM(X), T_*, W_1, \epsilon)$, but for simplicity we only state the result for Lipshitz systems $(X,T)$. \\

The Corollary also implies  that the mean dimension of $T_*$ with respect to the Wasserstein metric $W_1$ associated to some given compatible metric $d$ on $X$ is infinite, i.e. 
$$\frac{h_{W_1}(T_*,\epsilon)}{\log \frac{1}{\epsilon}}\xrightarrow{\epsilon\rightarrow 0}+\infty.$$ We also investigate the rate of divergence of $h_{W_1}(T_*,\epsilon)$ as $\epsilon$ tends to zero for Lipschitz systems.
\begin{thm}\label{thm:rate h}
	Suppose $(X,T)$ is of positive entropy and Lipschitz. Then there exist $C, \alpha>0$ such that we have 
	$$
	 \forall 0<\epsilon<1,\ \  h_{W_1}(T_*,\epsilon)\ge C\epsilon^{-\alpha}.
	$$
\end{thm}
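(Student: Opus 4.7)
The plan is to exhibit, for each small $\epsilon>0$ and each large integer $N$, a family of $\exp(CN\epsilon^{-\alpha})$ probability measures on $X$ that are pairwise $(N,\epsilon)$-separated in the Bowen--Wasserstein metric $\bar{d}_N^{W_1}(\mu,\nu):=\max_{0\le j<N}W_1(T_*^j\mu,T_*^j\nu)$ on $\MM(X)$. The construction refines the one behind Theorem~\ref{thm:rate m}, upgrading a topological widim bound into an exponential count of Bowen-separated orbits.

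First, positive entropy $h:=h_{top}(T)>0$ yields $(n,\delta_0)$-separated sets $E_n\subset X$ of cardinality $\ge e^{hn/2}$ for some fixed $\delta_0>0$ and all large $n$. Using that $T^n$ is $L^n$-Lipschitz (with $L$ the Lipschitz constant of $T$), setting $n=n(\epsilon):=\lceil\log(\delta_0/\epsilon)/\log L\rceil$ converts the Bowen $\delta_0$-separation into initial $\epsilon$-separation, producing $M\ge(\delta_0/\epsilon)^{h/(2\log L)}=:\epsilon^{-\alpha_0}$ points $y_1,\dots,y_M\in X$ pairwise at distance $\ge\epsilon$. After replacing $X$ by the invariant surjective sub-system $Y:=\bigcap_{j\ge 0}T^j X$ (which retains positive entropy), each $y_i\in Y$ admits a preimage $z_{i,j}\in T^{-jn}(y_i)$ for every $j\ge 0$. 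For each $k\ge 1$ and each sequence $\mathbf{w}=(w^{(0)},\dots,w^{(k-1)})\in(\{0,1\}^M)^k$ of binary words of fixed weight $s:=\lfloor M/2\rfloor$, set
$$\mu_{\mathbf{w}}:=\frac{1}{ks}\sum_{j=0}^{k-1}\sum_{i:w^{(j)}_i=1}\delta_{z_{i,j}}\in\MM(Y).$$

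The key estimate is that for any pair $\mathbf{w},\mathbf{w}'$ whose $j$-th blocks carry Hamming distance $\ge s/10$, one has $W_1(T_*^{jn}\mu_{\mathbf{w}},T_*^{jn}\mu_{\mathbf{w}'})\ge c_0\epsilon/k$ for a universal $c_0>0$; this holds cleanly (by Kantorovich--Rubinstein duality using the $\epsilon$-separation of the $y_i$'s) when the two sequences agree outside block $j$, since the ``stale'' atoms from other blocks then coincide, and extends to general pairs provided the preimages $z_{i,j}$ are chosen in sufficiently general position that $1$-Lipschitz test functions supported in an $\epsilon/3$-neighborhood of the $y_i$'s witness only the block-$j$ signal. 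A Gilbert--Varshamov random-coding argument gives a constant-weight code $C\subset\{0,1\}^M$ of weight-$s$ codewords with $|C|\ge 2^{c_1M}$ and pairwise Hamming distance $\ge s/10$; the $k$-fold product $C^k$ then contains $\ge 2^{c_1kM}$ sequences, any two of which differ in some block by Hamming distance $\ge s/10$, and are therefore pairwise $(kn,c_0\epsilon/k)$-separated in the Bowen--$W_1$ metric. This yields
$$h_{W_1}(T_*,c_0\epsilon/k)\ge\frac{c_1M\log 2}{n}\gtrsim\frac{\epsilon^{-\alpha_0}}{\log(1/\epsilon)},$$
which for any fixed $\alpha<\alpha_0$ exceeds $C'\epsilon^{-\alpha}$ for all sufficiently small $\epsilon$; the range where $\epsilon$ is bounded away from $0$ is absorbed into the constant via the positivity of $h_{W_1}(T_*,\epsilon)$ guaranteed by the Main Theorem.

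The main technical obstacle is the key estimate when $\mathbf{w}$ and $\mathbf{w}'$ differ in several blocks simultaneously: the stale atoms no longer cancel identically, so one must exploit general position of the preimages $z_{i,j}$ to prevent them from interfering with the block-$j$ signal in the Kantorovich--Rubinstein duality. The existence of preimages on the surjective sub-system $Y$ and the Gilbert--Varshamov estimate for constant-weight codes are routine; the only subtle point is propagating the single-block separation bound to sequences with many differing blocks while keeping the separation scale $\asymp\epsilon/k$ intact.
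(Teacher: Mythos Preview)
Your outline has a genuine gap precisely where you flag the ``main technical obstacle''. When $\mathbf{w}$ and $\mathbf{w}'$ differ in several blocks, the stale atoms from blocks $j\neq j_0$ do not cancel, and the appeal to ``general position'' of the preimages $z_{i,j}$ cannot repair this. Concretely, for $j<j_0$ one has $T^{j_0 n}z_{i,j}=T^{(j_0-j)n}y_i$, which is the \emph{forward} orbit of $y_i$ and is therefore determined, not chosen. Nothing prevents these forward iterates from landing in the $\epsilon/3$-neighbourhoods of the $y_{i'}$'s. If you test against a $1$-Lipschitz function supported near $\{y_i:i\in A\}$, the noise from a single stale block can be as large as $\frac{d_H(w^{(j)},w'^{(j)})\,\epsilon}{3ks}$, which is of order $\frac{\epsilon}{k}$ or worse and swamps the block-$j_0$ signal $\asymp\frac{\epsilon}{k}$. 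On an arbitrary compact metric space there is no meaningful ``general position'' mechanism that would force forward iterates of a separated set to avoid a prescribed $\epsilon$-net. Note also that you cannot sidestep the issue by taking $k=1$: the entropy $h_{W_1}(T_*,\cdot)$ is a $\limsup$ in $N$, and a single $(n,\epsilon/20)$-separated set of size $2^{c_1M}$ only gives $\frac{1}{N}\log s_N\to 0$; the multi-block construction is essential.

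The paper's proof avoids this difficulty by replacing the generic $(n,\delta_0)$-separated set with a far more rigid combinatorial input: an IE-pair $(U_0,U_1)$ together with an independence set $I$ of positive lower density (Kerr--Li, Lemma~\ref{keli}). One then chooses points $x_{\mathbf i}$ whose orbit at the prescribed times in $I$ visits $U_0$ or $U_1$ according to a coding $\mathbf i$, and builds measures as specific convex combinations of the $\delta_{x_{\mathbf i}}$ (barycentres of faces of $\Delta_{2^{q_m}}$, pushed through the multi-affine embedding $\Xi$). The point is that at time $im$ the supports split \emph{cleanly} into two pieces lying in $T^{-\ell}U_0$ and $T^{-\ell}U_1$ for some $\ell\in E_i$, so Lemma~\ref{ffact} gives the separation directly; there are no stale contributions to control. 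This yields $s_n\!\left(W,T_*^m,\MM(X),\gamma_m/2^{q_m}\right)\ge 2^{2^{q_m}\sharp I^m_n}$ (Lemma~\ref{lem:s_n}), and the Lipschitz hypothesis enters only at the end to convert $\gamma_m/2^{q_m}\asymp e^{-cm}$ into a power of $\epsilon$. If you want to salvage your approach, you would need your separated points to come with control on their orbits over the whole time window $[0,kn)$ --- which is exactly what the independence-set machinery supplies.
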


Under the same approach, our results (Main Theorem, Corollary, Theorem \ref{thm:rate m} and Theorem \ref{thm:rate h}) can be generalized to the action of congruent monotilable amenable groups, like $\Z^d, d\ge 2$. It seems that we could not apply directly the same method to the action of amenable groups which is not congruent monotilable. We thus wonder whether our results still hold for general amenable actions.

\subsection*{Organization of the paper}
In the next section we recall some basic facts on the  mean dimension. In the third section we give a  first direct proof of the Main theorem for full shifts on  $\{0,1\}^\mathbb{Z}$. This proof, which is independent from the proof  in  the general case, strongly uses the product structure on $\{0,1\}^\mathbb{Z}$. The mean dimension is shown to be infinite by embedding the shift on cubes of arbitrarily large dimension in the induced system $\left( \MM(\{0,1\}^{\mathbb Z}), \sigma_*\right)$. 
	Then we prove the main theorem in the general case. We work with the metric definition (\ref{metricdef})  of the topological mean dimension with respect to the Kantorovic-Rubinstein metric.
We use the theory of independence developed by   Weiss-Glasner \cite{glasner1995quasi}, Kerr-Li \cite{ker}  and others, which gives a \emph{weak} product structure in any topological system with positive entropy. By embedding generalized cubes of dimension $(k-1)n$ in the $k^{n}$-simplex we conclude the proof with a  Lebesgue's like theorem for these cubes. In Section 6 we establish the rates of convergence  for the mean dimension and entropy with respect to the Wasserstein distance given in Theorem \ref{thm:rate m} and Theorem \ref{thm:rate h}.
In the last section we study the mean dimension  of the action induced on the hyperspace of compact subsets of $X$. 

\section{Backgrounds}	\label{sec:background}
In the whole paper we let $(X,T)$ for a topological dynamical system, i.e. $X$ is compact metrizable and $T:X\rightarrow X$ a continuous map. 
We  denote by $d$ any metric  compatible with the topology on $X$.

\subsection{Dimension}
We first recall some standard concepts related to the dimension of the phase space $X$. 

\subsubsection{Order of a cover and Lebesgue's Lemma}
 We define the order $\rm ord (\mathcal A )$ of   a finite open cover $\mathcal{A}$  of $X$ as follows
	$$
	\text{\rm ord}(\mathcal{A}):=\sup_{x\in X} \sum_{A\in \mathcal{A}} 1_A(x)-1,
	$$

H. Lebesgue proved in \cite{Lebesgue} that cubes of different dimensions are not homeomorphic. We state below the key lemma of his approach.

\begin{lem}\cite{Lebesgue}
Let $\alpha$ be a finite open cover of the unit cube
$[0, 1]^n$. Suppose that there is no element of $\alpha$ meeting two opposite faces of $[0, 1]^n$.
Then we have  $$\rm ord (\alpha) \geq n.$$
\end{lem}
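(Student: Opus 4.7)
The plan is to argue by contradiction: suppose $\text{ord}(\alpha)\leq n-1$, i.e.\ every point of $[0,1]^n$ lies in at most $n$ elements of $\alpha$. I would construct an auxiliary continuous map $f:[0,1]^n\to[0,1]^n$ which preserves each face but has low-dimensional image, and derive a contradiction from Brouwer's no-retraction theorem.

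First, for each $A\in\alpha$ and each $i\in\{1,\ldots,n\}$, I use the hypothesis that $A$ does not meet both opposite faces $\{x_i=0\}$ and $\{x_i=1\}$ to pick $\epsilon_i(A)\in\{0,1\}$ with $A\cap\{x_i=\epsilon_i(A)\}=\emptyset$. Next I fix a continuous partition of unity $(\phi_A)_{A\in\alpha}$ subordinate to $\alpha$ and set
$$f_i(x)=\sum_{A\in\alpha}\phi_A(x)\bigl(1-\epsilon_i(A)\bigr),\qquad i=1,\ldots,n.$$

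Two properties of $f$ are crucial. \emph{Face preservation}: if $x_i=0$, every $A$ containing $x$ meets $\{x_i=0\}$, so by the choice of $\epsilon_i(A)$ one must have $\epsilon_i(A)=1$ and therefore $f_i(x)=0$; symmetrically $f_i(x)=1$ when $x_i=1$. Hence $f$ sends each face $\{x_i=\epsilon\}$ into itself. \emph{Low-dimensional image}: at any $x$ at most $n$ of the weights $\phi_A(x)$ are nonzero, so $f(x)$ is a convex combination of at most $n$ vertices of the cube $[0,1]^n$, and therefore lies in an at most $(n-1)$-dimensional affine simplex. The image $f([0,1]^n)$ is thus contained in a finite union of at most $(n-1)$-dimensional simplices and, in particular, has empty interior in $[0,1]^n$; there exists $y_0\in(0,1)^n\setminus f([0,1]^n)$.

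To conclude, I combine these. Face preservation implies that the straight-line homotopy $H_t(x)=(1-t)x+tf(x)$ stays in $[0,1]^n$ and preserves every face, so $f|_{\partial[0,1]^n}$ is homotopic to the identity through self-maps of $\partial[0,1]^n$. Composing $f$ with the radial projection $[0,1]^n\setminus\{y_0\}\to\partial[0,1]^n$ from the interior point $y_0$ yields a continuous map $[0,1]^n\to\partial[0,1]^n$ whose restriction to $\partial[0,1]^n$ is homotopic to the identity; by the homotopy extension property this produces a retraction of $[0,1]^n$ onto its boundary, contradicting Brouwer's no-retraction theorem.

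The main obstacle is the last topological step: one must correctly bridge the combinatorial face-preservation on $\partial[0,1]^n$ with the low-dimensionality of the image, which ultimately relies on Brouwer (in the guise of the no-retraction theorem or equivalently the non-vanishing of the degree). The combinatorial core—the construction of $f$ via the labels $\epsilon_i(A)$ and the verification of the two properties—is straightforward once those labels are fixed.
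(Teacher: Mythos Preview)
Your proof is correct. The paper does not actually prove this classical lemma (it only cites Lebesgue), but it does prove the generalized version for $\Delta_k^n$ (Lemma~\ref{Leb}) by exactly the same strategy you use: take a partition of unity subordinate to $\alpha$, assign to each cover element a vertex of the cube determined by which faces it meets, note that the resulting map $g$ has image contained in a finite union of simplices of deficient dimension, pick an interior point $y_0$ missed by $g$, and finish with a radial projection and Brouwer.

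The only cosmetic difference is a sign flip in the vertex assignment. You set $f_i(A)=1-\epsilon_i(A)$, so that $f$ sends each face $\{x_i=\epsilon\}$ into itself; this makes $f|_{\partial[0,1]^n}$ homotopic to the identity via the straight-line homotopy, and you conclude via the no-retraction form of Brouwer (through HEP). The paper instead assigns to each $U$ the \emph{opposite} vertex, so that the map $g$ pushes boundary points away from the face they lie on; this makes $g$ fixed-point-free on $\partial[0,1]^n$, and the paper concludes directly from Brouwer's fixed-point theorem applied to $r\circ g$. Your route costs one extra line (invoking HEP), but both arguments are standard and essentially equivalent.
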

 
 Let $\Delta_k$ be the standard $k$-simplex, $k\in \mathbb N^*$, i.e. $$\Delta_k:=\left\{(x_i)_i\in [0,1]^{k}, \ \sum_ix_i=1 \right\}.$$ 
 To prove the Main Theorem, we will generalize Lebesgue's Lemma to $n$-direct  products of  the simplex $\Delta_k$ (see Lemma \ref{Leb}) in place of $[0,1]^n$.  

\subsubsection{Topological dimension}

 For two finite open covers $\mathcal{A}$ and $\mathcal{B}$ of $X$, we say that the cover $\mathcal{B}$ is \textit{finer} than the cover $\mathcal{A}$, and write $\mathcal{B}\succ \mathcal{A}$, if for every element of $\mathcal{B}$, one can find an element of $\mathcal{A}$ which contains it.  For a finite open cover $\mathcal{A}$  of $X$, we define the quantity 

	$$
	D(\mathcal{A}):=\min_{\mathcal{B}\succ \mathcal{A}} \text{ord}(\mathcal{B}).
	$$
	 For finite open covers $\mathcal{A}$ and $\BB$, we set the joint $\AA \vee \BB:=\{U\cap V: U\in \AA, V\in \BB \}$. It is easy to check that $D(\AA\vee \BB)\le D(\AA)+D(\BB)$. Clearly, if $\mathcal{B}\succ \mathcal{A}$ then $D(\mathcal{B})\ge D(\mathcal{A})$. 
	 
	 The \textit{(topological) dimension} of $X$ is defined by
	$$
	\text{dim}(X):=\sup_{\mathcal{A}} D(\mathcal{A}),
	$$
	where $\mathcal{A}$ runs over all finite open covers of $X$. 	For a non-empty compact  $X$,  the {\it stable topological dimension}
	of $X$ is given  by
	$$
	\stab(X):=\lim_{n\to \infty} \frac{\dim(X^n)}{n}=\inf_{n\to \infty} \frac{\dim(X^n)}{n},
	$$
	where $X^n$ denotes the $n$-direct product of $X$. The limit above exists by sub-additivity of the sequence $\{\dim(X^n) \}_{n\ge 1}$. Moreover, if $X$ is finite dimensional, then we have  either $\stab(X)=\dim(X)$ and $\dim(X^2)=2\dim(X)$ (the space $X$ is then said of {\it basic type} ) or $\stab(X)=\dim(X)-1$ and $\dim(X^2)=2\dim(X)-1$ (and $X$ is said of  {\it exceptional type})). M. Tsukamoto proved in \cite{tsukamoto2019mean} that the full shift on $X^{\mathbb  N}$ has topological mean dimension equal to $\stab(X)$ whenever $X$ is finite dimensional.

\subsubsection{Metric $\epsilon$-dimension and width}\label{detail}
Recall $d$ denotes a compatible distance on $X$. 
	 For a set $Z$ and $\epsilon>0$, a map $f:X\to Z$ is called \textit{$(d, \epsilon)$-injective} if $\diam(f^{-1}(z))<\epsilon$ for all $z\in Z$. The \textit{metric $\epsilon$-dimension} $\text{dim}_\epsilon (X, d)$ is defined by 
	$$
	\text{dim}_\epsilon (X, d)=\inf_Y \text{dim}(Y),
	$$
	where $Y$ runs over all compact metrizable spaces for which there exists a $(d, \epsilon)$-injective continuous map $f:X\to Y$. 
	
 	The metric $\epsilon$-dimension of $X$  is bounded from above by the topological dimension of $X$ and  is going non-increasingly to the topological dimension when $\epsilon$ goes to zero. 
 

A variant of the metric  $\epsilon$-dimension introduced by Gromov \cite{G} is the {\it $\epsilon$-width}, $\wdim_{\epsilon}(X, d)$
which is  the
smallest integer $n$ such that there exists an $\epsilon$-injective continuous map $f : X \rightarrow P$
from $X$ into some $n$-dimensional polyhedron $P$. We always have $$\dim_\epsilon(X,d)\leq \wdim_{\epsilon}(X, d)\leq 2\dim_{\epsilon}(X, d)+1.$$
Gromov showed that the unit ball $B$ of a $n$-dimensional Banach $(E,\|\cdot\|)$ space satisfies
$$\forall 1>\epsilon>0, \ \wdim_{\epsilon}(B,\|\cdot\|)=n.$$

 Motivated by a
question raised by Gromov \cite[p.334]{G}, Gournay \cite{gournay} and Tsukamoto \cite{Tsu088} obtained
some estimates for $\wdim_{\epsilon}(B_p, \ell_q)$ with $B_p$ being the unit ball in $(\mathbb R^n,\ell_p)$.	
	Gromov also wondered about the width of the  simplex $\Delta_n$.
	
\subsection{Entropy}
To fix the notations we  recall here Bowen's definition of topological 
entropy. For any positive integer  $n$ we consider the $n$-dynamical distance $d_n$ as follows, $$\forall x,y\in X,\ d_n(x,y)=\max_{0\le i\le n-1}d(T^ix, T^iy).$$

	Let $K\subset X$ and $\epsilon>0$. A subset $E$ of $X$ is said to be {\it $(n, \epsilon)$-separated}  if we have  $d_n(x,y)>\epsilon$ for any $x\neq y\in E$. Denote by $s_n(d, T, K, \epsilon)$  the largest cardinality of any $(n, \epsilon)$-separated subset of $K$. Define
	$$
	h_d(K, T, \epsilon)=\limsup_{n\to \infty} \frac{1}{n} \log s_n(d, T, K, \epsilon).
	$$
	We sometimes write $h_d(T, \epsilon)$ when $K=X$. Observe that  for all positive integer $n$ and all $\epsilon>0$ we have the following estimate on the entropy of the $n$-power $T^n$:
	\begin{align}\label{power entropy}
	h_d(T^n, \epsilon)\leq n \cdot h_d(T, \epsilon).
	\end{align} The topological entropy is the limit of  $h_d(T, \epsilon)$ when $\epsilon$ goes to zero.  This limit does not depend on the choice of the metric $d$ (this is no more the case of  the topological and metric mean dimensions). 

The entropy is homogeneous, i.e. $h_{top}(T^n)=n\cdot h_{top}(T)$ for all $n\in\mathbb N$ and $h_{top}(X,T)\geq h_{top}(Y,T)$ for any invariant closed subset $Y$ of $X$. Also the entropy of the full shift over a finite alphabet $\mathcal A$ is equal to $\log \sharp \mathcal A$.  A topological system $(X,T)$ is said to have  a \textit{horseshoe} when the full shift over $\{0,1\}^{\mathbb N}$ embeds in some power $(X,T^m)$. It follows from the above properties, that such systems have positive topological entropy. Many systems with positive topological entropy have a horseshoe, e.g.:
\begin{itemize}
\item coded subshifts,  
\item continuous interval and circle maps with positive entropy (by Misiurewicz horseshoe theorem \cite{MisHor}),
\item $C^{1+}$ surface diffeomorphisms (by Katok's horseshoe theorem \cite{katok80}),
\item ...
\end{itemize}
But there are also systems with positive topological entropy without horseshoes, even uniquely ergodic minimal subshifts.\\

Inspired by the works on emergence introduced by P. Berger, Y. Ji, E. Chen and X. Zhou \cite{chen2020} defined a topological order which estimates the superexponential growth of orbits at arbitrarily small scaled. Let   $W_{d_n}$ be the $1$-Wasserstein metric associated to $d_n$. They define the {\it entropy order} $\mathcal E(T)$ of $T$ by $$\mathcal E(T):=\lim_{\epsilon\rightarrow 0}\limsup_n \frac{\log\log s_n(W_{d_n}, T_*,\MM(X),\epsilon)}{n}.$$
 They showed that the entropy order  is always equal to the topological entropy by Jewett-Krieger theorem.

\subsection{Mean dimension}\label{meansec}
	
	Let $(X, T)$ be a topological  dynamical system, i.e. $X$ is a compact metrizable space and $T:X\circlearrowleft$ is a continuous map. The {\it mean dimension} of $(X, T)$ is defined by
	$$
	\mdim(X, T)=\sup_{\alpha} \lim_{n\to \infty} \frac{D(\bigvee_{i=0}^{n-1} T^{-i}\alpha )}{n},
	$$
	where $\alpha$ runs over all finite open covers of $X$. The existence of the limit  follows from  the sub-additivity of the sequence $\left(D(\bigvee_{i=0}^{n-1} T^{-i}\alpha)\right)_{n\ge 1}$. We put  also $\mdim(X,T,\alpha)=\lim_{n\to \infty} \frac{D(\bigvee_{i=0}^{n-1} T^{-i}\alpha )}{n}.$
	When there is no ambiguity we write also $\mdim(T)$ for $\mdim(X, T)$.

	We mention some basic properties of mean dimension. We refer to the book \cite{Coo05} for the proofs and further properties. 
	\begin{itemize}
		\item 
	For a metric $d$ on $X$ compatible with the topology, we have
		\begin{equation}\label{metricdef}
		\mdim(X,T)=\lim_{\epsilon \rightarrow 0} \mdim_d(T,\epsilon),
		\end{equation}
		where we let $\mdim_d(T,\epsilon)=\lim_{n\to \infty} \frac{\dim_{\epsilon}(X, d_n)}{n}.$
Observe the limit in $\epsilon$ in the definition 		(\ref{metricdef}) of $\mdim(X,T)$ is also a supremum over $\epsilon$. 
		\\
		
		\item If $(Y,T)$ is a subsystem of $(X,T)$, i.e. $Y$ is a closed $T$-invariant subset of $X$, then \begin{equation}\label{sub}\mdim(Y,T)\le \mdim(X,T).\end{equation}
		\item For $n\in \N$, \begin{equation}\label{power}\mdim(X,T^n)=n\cdot \mdim(X,T).\end{equation}
		\item For dynamical systems $(X_i,T_i)$, $1\le i\le n$, we have
		$$
		\mdim(X_1\times X_2 \times \dots \times X_n, T_1\times T_2 \times \dots \times T_n)\le \sum_{i=1}^n \mdim(X_i, T_i).
		$$
	\end{itemize}

	The {\it upper metric mean dimension} of the system $(X, d, T)$ is defined by
	$$
	\overline{\mdim} (X, d, T)=\limsup_{\epsilon\to 0} \frac{h_d(T, \epsilon)}{\log \frac{1}{\epsilon}}.
	$$
	Similarly, the {\it lower metric mean dimension} is defined by
	$$
	\underline{\mdim} (X, d, T)=\liminf_{\epsilon\to 0} \frac{h_d(T, \epsilon)}{\log \frac{1}{\epsilon}}.
	$$
	If the upper and lower metric mean dimensions coincide, then we call their common value the metric mean dimension of $(X, d, T)$ and denote it by ${\mdim} (X, d, T)$. Unlike the topological entropy, the metric mean dimension depends on the metric $d$. An important property of metric mean dimension is
	$$
	\mdim(X,T)\le \underline{\mdim} (X, d, T),
	$$
	for any metric $d$ \cite{LindenstraussWeiss2000MeanTopologicalDimension}.

It follows from the definitions and Inequality (\ref{power entropy}) that:
\begin{align}\label{power metric}
\forall n\in \mathbb N, \  \underline{\mdim} (X, d, T^n)\leq n\cdot  \underline{\mdim} (X, d, T).
\end{align}
	
\subsection{Embedding in induced system}	
	Let $(X,T)$ be a dynamical system. For any $n\in \mathbb N^*$
	we denote by  $(X^{(n)}, T^{(n)})$  the $n$-direct product of $(X,T)$. These products embed in $(\MM(X), T_*)$. Indeed we may find $n$ positive integers $k_1\ll \cdots \ll k_n$ such that for  any two subfamilies $I\neq J$  of $\{1,\cdots, n\}$ we have $\sum_{i\in I}k_i\neq \sum_{j\in J}k_j$. Then we define a dynamical embedding  $\pi:(X^{(n)}, T^{(n)})\rightarrow (\MM(X), T_*)$ as follows $$\forall (x_1,\cdots, x_n)\in X^{(n)}, \ \pi(x_1,\cdots, x_n)=
\frac{\sum_{i=1,\cdots, n}k_i\delta_{x_i}}{\sum_{i=1,\cdots, n}k_i },$$ where $\delta_x$ is the Dirac measure at $x$.\\

 The topological  entropy of $T^{(n)}$ is $n\cdot h_{top}(T)$. The  topological mean dimension  satisfies the same property as proved recently in \cite{Meanprod} :  $$\forall n\in \mathbb N, \ \mdim(X^{(n)}, T^{(n)})=n \cdot\mdim(X,T).$$ 
Therefore as the topological entropy  (resp. topological mean 
dimension) is a topological invariant which is smaller  for 
subsystems, we get $h_{top}(T_*)\geq n\cdot h_{top}(T)$ (resp. $\mdim
(T_*)\geq n\cdot \mdim(T)$) for all $n$. In particular if $T$ has positive 
topological entropy (resp. mean dimension) then $T_*$ has infinite 
topological entropy (resp. mean dimension). For the topological entropy this property was first noticed in \cite{SigAff}. \\

\section{A first direct proof for systems with a horseshoe}
We first prove the Main Theorem for systems with a horseshoe. By (\ref{power}) and (\ref{sub}) it is enough to show it for a full shift on a finite alphabet $\mathcal A$. 
Let $\sigma$ be the shift on $\AA^\N$, i.e. $(x_n)_{n\in \N} \mapsto (x_{n+1})_{n\in \N} $. For a finite word $a=(a_0,\cdots, a_{n-1})\in \AA^n$ with some integer $n$, we denote by $[a]$ the cylinder with preword $a$, i.e. 
$$
[a]:=\{x\in \AA^\N: x_i=a_i, \forall 0\le i\le n-1 \}.
$$

\begin{thm}\label{main thm 1}
For $\sharp \AA \ge 2$, we have $\mdim( \MM(\AA^\N), \sigma_*)=+\infty$.
\end{thm}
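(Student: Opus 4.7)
The plan is to embed, for every integer $n\geq 1$, a shift system of mean dimension at least $(\sharp\AA)^n-1$ into the $n$-th iterate $(\MM(\AA^\N),\sigma_*^n)$ of the induced system, then pass to the limit via the power formula $\mdim(X,T^n)=n\cdot\mdim(X,T)$ from Section~\ref{meansec}.

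Fix $n\geq 1$ and reinterpret $(\AA^\N,\sigma^n)$ as the full shift on the block alphabet $\BB:=\AA^n$, of cardinality $k:=(\sharp\AA)^n$. Let $\Delta_k$ denote the simplex of probability vectors on $\BB$, and consider the product-measure map
$$
\Phi:\Delta_k^\N\longrightarrow \MM(\BB^\N),\qquad (p_j)_{j\in\N}\longmapsto \bigotimes_{j\in\N}p_j,
$$
where the right-hand side assigns mass $\prod_{j=0}^{m-1}p_j(b_j)$ to each cylinder $[b_0,\ldots,b_{m-1}]$. I would then check three elementary properties of $\Phi$: continuity in the weak-$*$ topology (each cylinder mass depends polynomially on finitely many marginals, and the cylinder algebra is dense in $C(\BB^\N)$), injectivity (the one-dimensional marginals of a product measure recover each $p_j$), and equivariance between the shift on $\Delta_k^\N$ and $\sigma_*$ on $\MM(\BB^\N)=\MM(\AA^\N)$ (a direct check on cylinders).

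Because $\Delta_k$ is homeomorphic to a $(k-1)$-ball, which is of basic type with $\stab(\Delta_k)=k-1$, Tsukamoto's theorem cited in Section~\ref{sec:background} yields $\mdim(\Delta_k^\N,\mathrm{shift})=k-1$. The subsystem inequality~(\ref{sub}) applied to $\Phi(\Delta_k^\N)\subset\MM(\AA^\N)$, combined with the power formula~(\ref{power}), then gives
$$
n\cdot\mdim(\MM(\AA^\N),\sigma_*)\;=\;\mdim(\MM(\AA^\N),\sigma_*^n)\;\geq\;\mdim(\Delta_k^\N,\mathrm{shift})\;\geq\;(\sharp\AA)^n-1,
$$
and letting $n\to\infty$ forces $\mdim(\MM(\AA^\N),\sigma_*)=+\infty$.

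The main obstacle here is essentially bookkeeping: carefully unwinding the product-measure embedding and verifying its equivariance. Note that the Dirac-averaging embedding from Section~2.4 would be useless, since the base system $(\AA^\N,\sigma)$ has zero topological mean dimension (as $\AA^\N$ is zero-dimensional), so that embedding yields no lower bound at all. It is the product-measure construction $\Phi$ that injects genuine positive-dimensional simplex factors at every coordinate and produces the exponentially large lower bound $(\sharp\AA)^n-1$, which after dividing by $n$ still diverges.
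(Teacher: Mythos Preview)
Your proof is correct and follows essentially the same route as the paper's: both fix a block length (your $n$, the paper's $m$), identify $(\AA^\N,\sigma^n)$ with the full shift on $\BB=\AA^n$, embed the product-measure shift $(\Delta_{\sharp\BB}^\N,\text{shift})$ into $(\MM(\AA^\N),\sigma_*^n)$ via $(p_j)_j\mapsto\bigotimes_j p_j$, and then invoke the power formula together with $\mdim(\Delta_k^\N,\text{shift})=\stab(\Delta_k)=k-1$ to obtain $\mdim(\sigma_*)\geq(\sharp\AA^n-1)/n\to\infty$. Your closing remark on why the Dirac-averaging embedding of Section~2.4 is useless here is a nice addition not spelled out in the paper.
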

 Using the product structure of $\mathcal A^n$, we will embed the full shift over a simplex of arbitrarily large dimension in $(\MM(\mathcal A^\mathbb N ), \sigma_*)$.
\begin{proof}
Fix $m\in \mathbb N^*$. Let $\AA^m$ be the set of words of length $m$. Recall that $\Delta_{\sharp\mathcal A^m}$ denotes the $\sharp\mathcal A^m$-simplex, which can be seen as the set of probability measures on $\AA^m$: $$\Delta_{\sharp\mathcal A^m}:=\left\{ (x_i)_{i\in \AA^m}\in [0,1]^{\AA^m} : \sum_{i\in \AA^m} x_i=1 \right \}.$$
In this case we have  $\stab(\Delta_{\sharp\mathcal A^m})=\dim(\Delta_{\sharp\mathcal A^m})=\sharp \AA^m-1$. For any $a\in \Delta_{\sharp\mathcal A^m}$ we let $\mu_a$ be the probability measure on $\sharp 
\mathcal A^m$ with probability vector $a=(a^j)_{j\in \mathcal A^m}$, i.e. $$\mu_a(\{j\})=a^j \text{ for all } j\in \mathcal A^m.$$  We identify $(\mathcal A^m)^\mathbb N$ with $\mathcal A^\mathbb N$ by the natural homeomorphism   
$\pi:(\mathcal A^m)^\mathbb N\rightarrow\mathcal A^\mathbb N$, which associates  to $(x_j^i)_{i\in \{0,\cdots,m-1\}, j\in \N}$  the 
sequence $(y_k)_{k\in \mathbb N}$ defined by $y_{jm+i}=x_{j}^i$ for any $j\in \mathbb N$ and $i\in \{0,\cdots,m-1\}$. Then define $f_m:\Delta_{\sharp\mathcal A^m}^\mathbb N\rightarrow \mathcal M(\mathcal A^\mathbb N)$ as follows:
$$\forall a=(a_n)_{n\in \mathbb N}\in \Delta_{\sharp\mathcal A^m}^{\mathbb N}, \ f_m(a)=\pi_*(\mu_{a_1}\times\mu_{a_2}\times \cdots ),$$
where $\mu_{a_1}\times\mu_{a_2}\times \cdots$ denotes the direct product measure on $(\mathcal A^m)^\mathbb N$. Clearly $f_m$ is injective, continuous and for all  $a=(a_n)_{n\in \mathbb N}\in (\mathcal A^m)^{\mathbb N}$ we have with $\sigma$, $\sigma'$  and $\sigma''$ being respectively the shifts on $\AA^{\mathbb N}$, $(\mathcal A^m)^{\mathbb N}$ and $\Delta_{\sharp\mathcal A^m}^\mathbb N$  : 
\begin{align*}
f_m\circ \sigma'' (a)&=\pi_*(\mu_{a_2}\times\mu_{a_3}\times \cdots ),\\
&=\pi_*\circ \sigma'_*(\mu_{a_1}\times\mu_{a_2}\times \cdots ),\\
&= \sigma^m_*\circ \pi_*(\mu_{a_1}\times\mu_{a_2}\times \cdots ),\\
&=\sigma^m_*\circ f_m(a).
\end{align*}
In other terms, $f_m:(\Delta_{\sharp\mathcal A^m}^{\mathbb N}, \sigma'')\rightarrow (\MM(\AA^\mathbb N), \sigma_*^m)$ is a dynamical embedding. In particular  we get for all positive integers $m$:
\begin{align*}
 \mdim(\sigma_*)&=\frac{\mdim(\sigma_*^m)}{m},\\
&\geq \frac{\mdim(\sigma'')}{m},\\
&\geq  \frac{\stab(\Delta_{\sharp\mathcal A^m})}{m}.\\
&\geq \frac{\sharp \AA^m-1}{m}\xrightarrow{m\rightarrow \infty}+\infty.
\end{align*}
\end{proof}
From Theorem \ref{main thm 1}, the conclusion of the Main Theorem holds for any topological system $(X,T)$ with a horseshoe.
For a positive integer $a$ we recall $T_a$ denotes the $\times a$-map on the circle. For $a>1$ we have $h_{top}(T_a)=\log a>0$ and $T_a$ has then a horseshoe.
In particular the topological mean dimension of $T_a$ is infinite. Therefore we recover Kloeckner's result: 
$$\underline{\mdim}(\MM(X), W_1, (T_a)_*)\geq \mdim(\MM(X),(T_a)_*)=+\infty.$$

\section{Tools for the proof in the general case}

We need some preparation before proving  the Main Theorem for a general topological system with positive entropy. 

\subsection{Wasserstein distance}	
	
To estimate the topological mean dimension we use the metric definition (\ref{metricdef}) of the topological mean dimension for the Wasserstein distance $W_1$ on $\MM(X)$ (which is compatible with the weak-$*$ topology). The $p^{\text{th}}$-Wasserstein distance between two probability measures $\mu$ and $\nu$ in $\MM(X)$ is defined as
$$W_p(\mu, \nu)=\left(\inf_{\gamma\in \Gamma(\mu,\nu)}\int d(x,y)^p\, d\gamma(x,y)\right)^{1/p},$$
where $\Gamma (\mu ,\nu )$ denotes the collection of all measures on 
$X\times X$  with marginals $\mu$ and $\nu$  on the first and second factors respectively. In the present paper we only make use of $W=W_1$. The Kantorovich-Rubinstein dual representation of $W$ is given by 
$$\forall \mu,\nu\in \MM(X), \ W(\mu,\nu)=\sup_f\int f\,(d\mu-d\nu),$$
where the supremum holds over all $1$-Lipschitz functions $f:X\rightarrow \mathbb R$. \\

For a metric space $(Y,\rho)$ we let $\diam_\rho(Y)$ be the diameter of $Y$ with respect to the distance $\rho$. From the Kantorovich-Rubinstein dual representation of $W$ we  get easily 
$$\diam_W(\MM(X))\leq \diam_d(X).$$

We also recall below a useful  property of $ W$, which follows  from the Lipschitz property of the distance. 
\begin{lem}\label{ffact} 
 Let $\mu$ and $\nu$ be two measures in $\MM(X)$ respectively supported on two compact sets $S$ and $S'$. 
Then
$$W(\mu, \nu)\geq \mu(S\setminus S') d(S\setminus S',S').$$

\end{lem}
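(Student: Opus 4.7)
The plan is to produce the required lower bound directly from the Kantorovich-Rubinstein duality
$$W(\mu,\nu)=\sup_f\int f\,(d\mu-d\nu),$$
recalled just above the lemma, by exhibiting a single well-chosen $1$-Lipschitz test function. The natural choice is the distance-to-$S'$ function
$$f(x):=d(x,S'),$$
which is $1$-Lipschitz because $d$ is a metric, and which vanishes identically on $S'$.

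Since $\nu$ is supported on $S'$, we have $\int f\,d\nu=0$. Since $\mu$ is supported on $S$, we can split
$$\int f\,d\mu=\int_{S\cap S'}d(x,S')\,d\mu(x)+\int_{S\setminus S'}d(x,S')\,d\mu(x),$$
where the first term vanishes because $d(x,S')=0$ for $x\in S'$. For the remaining term, by the very definition of the distance between two sets, $d(x,S')\geq d(S\setminus S',S')$ for every $x\in S\setminus S'$, hence
$$\int_{S\setminus S'}d(x,S')\,d\mu(x)\geq d(S\setminus S',S')\cdot \mu(S\setminus S').$$
Plugging this into the duality formula gives the claimed inequality $W(\mu,\nu)\geq \mu(S\setminus S')\,d(S\setminus S',S')$.

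I do not anticipate any real obstacle here: the only ingredient beyond the Kantorovich-Rubinstein formula is the elementary fact that $x\mapsto d(x,S')$ is $1$-Lipschitz, and the chain of inequalities is a two-line computation once one writes down the right test function.
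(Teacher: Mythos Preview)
Your proof is correct and follows essentially the same approach as the paper: both use the Kantorovich--Rubinstein duality with the test function $f(x)=d(x,S')$ and bound the integral from below by $\mu(S\setminus S')\,d(S\setminus S',S')$. The only cosmetic difference is that the paper first decomposes $\mu=\lambda\mu'+(1-\lambda)\mu''$ with $\mu'$ supported on $S\setminus S'$ and $\mu''$ on $S'$, whereas you split the domain of integration directly; the computations are otherwise identical.
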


\begin{proof}
Write $\mu$ as $\mu=\lambda\mu'+(1-\lambda)\mu''$ with $\mu',\mu''\in \MM(X)$ and $\lambda\in [0,1]$ satisfying $\mu'(S\setminus S')=1$ and $\mu''( S')=1$. Note that we have $\lambda=\mu(S\setminus S')$. Then, as $d(\cdot, S')$ is $1$-Lipschitz, we get:
\begin{align*}
W(\mu,\nu)&\geq \int d(\cdot, S')\, (d\mu-d\nu),\\
&\geq \lambda\int d(\cdot, S')\, d\mu', \text{ because $\nu$ and $\mu''$ are supported on $S'$, }\\
&\geq \lambda d(S\setminus S', S')=\mu(S\setminus S')d(S\setminus S', S').
\end{align*}

\end{proof}

\subsection{Independence set}\label{indepe}
 For a subset $E$ of $\mathbb N$ we let  $E_n=E\cap [1,n]$ for all $n\in \mathbb N$. Then   we let  $\underline{d}(E)$ be the {\it lower  asymptotic density} of $E$ given by
$$\underline{d}(E)=\liminf _n\frac{\sharp E_n }{n}.$$ 
B. Weiss and E. Glasner \cite{glasner1995quasi}, then D. Kerr and J. Li showed positive entropy implies some weak product structure of dynamical system. 

\begin{lem}[\cite{ker} Proposition 3.9 (2)]\label{keli}
Let $(X,T)$ be a topological system with positive topological entropy. There exist  non empty open subsets $U_0,U_1$ of $X$ with $\overline{U_0}\cap \overline{U_1}= \emptyset$ and $I\subset \mathbb N$ with $\underline{d}(I)>0$ such that  for any finite subset $J$ of $I$ and for any $\zeta:J\rightarrow \{0,1\}$ we have 
$$\bigcap_{j\in J}T^{-j}U_{\zeta(j)}\neq \emptyset.$$
\end{lem}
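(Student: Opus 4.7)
My approach is to use positive topological entropy to produce many orbital separations, extract a large combinatorial independence pattern via a Sauer--Shelah type density lemma, and then assemble an infinite independence set of positive lower density by a Furstenberg correspondence argument.

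\emph{Step 1 (entropy to itineraries).} Fix $\epsilon>0$ with $h:=h_d(T,\epsilon)>0$, and cover $X$ by finitely many open balls $B_1,\dots,B_k$ of radius $\epsilon/4$, arranging that among them at least one pair has disjoint closures (shrink the cover if necessary). For every sufficiently large $n$, choose an $(n,\epsilon)$-separated set $E_n\subset X$ with $|E_n|\ge e^{hn/2}$. Define the itinerary map $\varphi_n\colon E_n\to\{1,\dots,k\}^n$ by $\varphi_n(x)_i=j$ whenever $T^ix\in B_j$ (choose one such $j$). Two points with identical itineraries lie within $\epsilon/2$ in the $d_n$-distance and hence cannot both be $(n,\epsilon)$-separated, so $\varphi_n$ is injective and $|\varphi_n(E_n)|\ge e^{hn/2}$.

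\emph{Step 2 (combinatorial extraction).} Apply the Karpovsky--Milman / density Sauer--Shelah lemma: there exists $c=c(h,k)>0$ such that, for each large $n$, one can find $J_n\subset\{1,\dots,n\}$ with $|J_n|\ge cn$ and two letters $a_n\ne b_n\in\{1,\dots,k\}$ for which every $\zeta\colon J_n\to\{a_n,b_n\}$ is realized as the restriction of some itinerary in $\varphi_n(E_n)$. Unraveling the definition, this means $\bigcap_{j\in J_n}T^{-j}B_{\zeta(j)}\ne\emptyset$ for every such $\zeta$. Since there are only finitely many pairs $(a_n,b_n)$, by pigeonhole some pair $(a,b)$ repeats for infinitely many $n$. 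Choose relatively compact open sets $U_0\subset B_a$ and $U_1\subset B_b$ with $\overline{U_0}\cap\overline{U_1}=\emptyset$, using that the initial cover was arranged to allow this.

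\emph{Step 3 (passage to the infinite limit --- the main obstacle).} It remains to merge the finite independence sets $J_n$ into a single $I\subset\mathbb N$ with $\underline d(I)>0$, which is the delicate step: the $J_n$ are not nested, and a naive diagonal extraction would only yield positive upper density. The strategy is a Furstenberg-style correspondence: regard each indicator $\mathbf 1_{J_n}\in\{0,1\}^{\mathbb N}$ as a point of the full shift and form the empirical averages $\nu_n:=\frac1n\sum_{i=0}^{n-1}\delta_{\sigma^i\mathbf 1_{J_n}}$. Any weak-$*$ limit $\nu$ is $\sigma$-invariant, and $\nu(\{x:x_0=1\})\ge c$ since the densities $|J_n|/n$ stay above $c$. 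The finite independence condition from Step 2 is a closed constraint on the joint system $\{0,1\}^{\mathbb N}\times X$ (it says that on the support of $\nu$, every finite $\{a,b\}$-pattern located at the $1$'s of $x$ is realized in $X$), so it transfers to $\nu$-typical configurations by compactness of $X$ and a standard diagonal/limit argument. The Birkhoff ergodic theorem, applied to an ergodic component of $\nu$ of measure at least $c$ on $\{x_0=1\}$, then furnishes a generic $x$, and setting $I:=\{n\in\mathbb N: x_n=1\}$ yields $\underline d(I)\ge c$ together with the required independence of $(U_0,U_1)$ along $I$.
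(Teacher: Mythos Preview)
The paper does not supply its own proof of this lemma; it is quoted verbatim from Kerr--Li (Proposition~3.9(2)), with only the remark that the existence of IE-pairs ultimately rests on the Sauer--Shelah combinatorial lemma. So there is no in-house argument to compare against, and your proposal is really an attempt to reconstruct the Kerr--Li proof.

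Your architecture is the right one---separated orbits, a Sauer--Shelah extraction, and then a passage to an infinite independence set---and Step~3 via a Furstenberg-type correspondence is a legitimate route to upgrade the finite $J_n$'s to a single $I$ of positive lower density. (One must check that shifts of independence sets remain independence sets and that ``$I_x$ is an independence set for the \emph{closed} pair $(\overline{U_0},\overline{U_1})$'' is a closed condition on $x\in\{0,1\}^{\mathbb N}$; both hold, and at the very end one reopens to slightly larger $V_i\supset\overline{U_i}$ with $\overline{V_0}\cap\overline{V_1}=\emptyset$ by normality.)

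The genuine gap is in Step~2. The Karpovsky--Milman/Sauer--Shelah extraction produces a pair of \emph{alphabet indices} $a\ne b$, but nothing in your setup forces $\overline{B_a}\cap\overline{B_b}=\emptyset$: the pair is selected by pure combinatorics on the itinerary coding, not by any metric separation, and your clause ``using that the initial cover was arranged to allow this'' does not help, since you arranged only that \emph{some} pair in the cover has disjoint closures, not the particular one the lemma hands you. Nor can you shrink $B_a,B_b$ to smaller $U_0\subset B_a$, $U_1\subset B_b$ with disjoint closures after the fact, because the witnesses realizing each $\{a,b\}$-pattern were only shown to land in $B_a,B_b$, and may all sit in the overlap. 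The standard remedy (and essentially what Kerr--Li do) is to reverse the order of choices: first fix disjoint closed sets $A_0,A_1$ coming from an entropy pair, so that the two-element cover $\{X\setminus A_0,\,X\setminus A_1\}$ already has positive combinatorial entropy, and then run the Sauer--Shelah argument on the ternary coding $\{0,1,*\}^n$ recording membership in $A_0$, $A_1$, or neither. With the separated pair fixed in advance, the disjoint-closure requirement is automatic and the rest of your outline goes through.
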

Conversely the existence of such a pair $U_0$ and $U_1$ clearly implies that $(X,T)$ has positive entropy. The set $I$ is called an \textit{independence set } and  $(U_0,U_1)$ is an {\it IE-pair}. Existence of IE-pairs follows from a combinatorial result due to N. Sauer \cite{Sauer} and S. Shelah \cite{Shelah}. This result was also a key point in the "non-ergodic" proof of $[h_{top}(T)=0]\Rightarrow [h_{top}(T_*)=0]$ by B. Weiss and E. Glasner \cite{glasner1995quasi}.

 Given $(X,T)$ with $h_{top}(T)>0$ we fix from now the   IE-pair $(U_0,U_1)$ and the independence set $I$ as in Lemma \ref{keli}.  For all $m\in \mathbb N$ we let $$q_m=\left\lfloor \frac{m \cdot\underline{d}(I)}{2}\right\rfloor$$
where $\lfloor x \rfloor$ is the largest integer $n\le x$ and we consider 
$$I^m=\left\{k\in \mathbb N, \ \sharp [km,(k+1)m[\cap I >q_m \right\}.$$

As in the proof of Theorem \ref{main thm 1} we will bound from below the mean dimension of $T_*^m$ to show the Main Theorem  and then take the limit in $m$.  We need the following lower bound on the  lower  asymptotic density of $I_m$.
\begin{lem}\label{comb}
For any $m\ge 1$, we have
$$\underline{d} (I^m)\geq \frac{\underline{d}(I)}{2}.$$
\end{lem}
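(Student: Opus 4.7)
The proof is essentially a double-counting / pigeonhole argument, partitioning $\mathbb{N}$ into the consecutive blocks $[km,(k+1)m)$, $k\in\mathbb{N}$, and comparing the total density of $I$ in $[0,Nm)$ with the number of ``heavy'' blocks (those indexed by $I^m$).

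More precisely, set $\alpha:=\underline{d}(I)$ and $a_N:=\sharp\bigl(I^m\cap[0,N)\bigr)$. For $k\notin I^m$ one has $\sharp\bigl([km,(k+1)m)\cap I\bigr)\le q_m$ by the very definition of $I^m$, while for $k\in I^m$ the trivial bound $\sharp\bigl([km,(k+1)m)\cap I\bigr)\le m$ is still available. Summing over $k=0,\dots,N-1$ therefore yields
$$\sharp\bigl(I\cap[0,Nm)\bigr)\;\le\; q_m(N-a_N)+m\,a_N\;=\;q_m N+(m-q_m)\,a_N.$$
Dividing by $Nm$ gives
$$\frac{\sharp\bigl(I\cap[0,Nm)\bigr)}{Nm}\;\le\;\frac{q_m}{m}+\Bigl(1-\frac{q_m}{m}\Bigr)\frac{a_N}{N}.$$

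Now I would take $\liminf$ as $N\to\infty$. The left-hand side, being evaluated along the subsequence $n=Nm$ of the sequence defining $\underline{d}(I)$, has $\liminf$ at least $\alpha$; the right-hand side has $\liminf$ equal to $q_m/m+(1-q_m/m)\underline{d}(I^m)$ since $(1-q_m/m)\ge 0$ is constant in $N$. Using finally the bound $q_m/m\le\alpha/2$ coming from $q_m=\lfloor m\alpha/2\rfloor$, and $1-q_m/m\le 1$, one obtains
$$\alpha\;\le\;\frac{\alpha}{2}+\underline{d}(I^m),$$
i.e. $\underline{d}(I^m)\ge\alpha/2$, as desired.

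There is no real obstacle here: the only points requiring a little care are the identification of $\liminf_N a_N/N$ with $\underline{d}(I^m)$ (harmless: $[0,N)$ and $[1,N]$ differ by one element, which is asymptotically negligible), and the observation that taking a $\liminf$ along a subsequence can only increase it, so passing from $n$ to $n=Nm$ preserves the lower bound $\alpha$ on the left-hand side.
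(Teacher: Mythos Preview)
Your proof is correct and follows essentially the same approach as the paper: both partition $[0,Nm)$ (resp.\ $[1,mn]$) into blocks of length $m$, bound the contribution of blocks not in $I^m$ by $q_m$ and blocks in $I^m$ trivially by $m$, divide through by $Nm$, and pass to the $\liminf$ using $q_m/m\le\underline{d}(I)/2$ and $1-q_m/m\le 1$. The paper's argument is a bit terser but identical in substance; your added remarks about endpoints and the subsequence are justified but not essential.
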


\begin{proof}
For $n\in \mathbb N$, we have $$\sharp I_{mn}\leq m \sharp I^m_n+q_m (n-\sharp I^m_n), $$
therefore by taking the limit  in $n$ we get:
\begin{align*}
\underline{d}(I)&\leq \frac{m-q_m}{m}\underline{d}(I^m)+\frac{q_m}{m},\\
&\leq \underline{d}(I^m)+\frac{\underline{d}(I)}{2}.
\end{align*}

\end{proof}

\subsection{Simplex and generalized cube}
Recall that  $\Delta_k$ is the standard $k$-simplex of dimension $k-1$. A {\it $\ell$-face} of the simplex $\Delta_k$ is $$\{ (x_i)_{i\in I} \in [0, 1]^I: \sum_{i\in I} x_i =1 \}$$ for some $I\subset \{1, 2, \dots, k\}$ with $\sharp I=\ell$. Cleqrly, any $\ell$-face is affinely homeomorphic to $\Delta_\ell$. The {\it opposite face} $\bar{F}$ of a face $F=\{ (x_i)_{i\in I} \in [0, 1]^I: \sum_{i\in I} x_i =1 \}$ is $$\{ (x_i)_{i\in I^c} \in [0, 1]^{I^c}: \sum_{i\in I^c} x_i =1 \}$$ where $I^c=\{1, 2, \dots, k\} \setminus I$. The opposite face of a $(k-1)$-face $F$ is just the vertex of the simplex which does not belong to $F$. Notice also that for $m$ $(\le k)$ distinct $(k-1)$-faces $(F_j)_{1\le j\le m}$, the intersection ${\cap_{1\le j\le m} F^j}$ is a $(k-m+1)$-face.

We denote by  $v(\Delta_k)$  the set of vertices of $\Delta_k$ and by $*$ the center of $\Delta_k$, i.e. the point with coordinates $(1/k,\cdots,1/k)$.  Note that $\Delta_1$ is a compact interval. The polyhedron given by the $n$-product $\Delta_k ^n$ will be called a \textit{generalized cube}.  
Let  $F$ be a face  of $\Delta_k$.  For $i\in \{1,\cdots, n\}$ we let 
$F_i$ be the face of  $\Delta_k^n$ given by   $$F_i=\underbrace{\Delta_k\times \cdots \Delta_k}_{i-1}\times F\times \underbrace{\Delta_k\times \cdots \Delta_k}_{n-i}.$$ 
The {\it boundary} of $\Delta_k^n$, denoted by $\partial\Delta_k^n$, is the union of $F_i$ for all $(k-1)$-faces $F$ and $1\le i\le n$. The {\it interior} of $\Delta_k^n$ is $\Delta_k^n \setminus \partial\Delta_k^n$.

We introduce an adapted  notion of separation of faces for generalized cubes.
\begin{df}
A cover $\alpha$ of $\Delta_k^n$ is said to be \textbf{separating} if for any $i\in \{1,\cdots,n\}$ and for any subfamily $(U_j)_{j\in J}$ of $\alpha$ with  $(k-1)$-faces $(F^j)_{j\in J}$ of $\Delta_k$ satisfying $F^j_i\cap U_j\neq \emptyset$ for all $j\in J$, the intersection  set 
$ \bigcap_{j\in J}U_j $ and  the opposite face $\left(\overline{\cap_{j\in J} F^j}\right)_i$  are disjoint.
\end{df}

\subsection{Generalized Lebesgue's lemma}

We need a Lebesgue's like Lemma for  generalized cubes. 


\begin{lem}[Generalized Lebesgue's lemma]\label{Leb}
Any separating open cover $\alpha$ of $\Delta_k^n$  has order larger than or equal to $nk$. 
\end{lem}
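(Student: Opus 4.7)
My strategy is to adapt the classical proof of Lebesgue's lemma to the simplex setting: construct a partition-of-unity map $\Delta_k^n\to\Delta_k^n$ that respects the facet structure, then read off a dimension obstruction on the nerve of $\alpha$. Assume for contradiction that $\alpha$ is a separating open cover with $\mathrm{ord}(\alpha)<nk$, and pick a partition of unity $(\phi_U)_{U\in\alpha}$ subordinate to $\alpha$. For each $U\in\alpha$ and each direction $i\in\{1,\ldots,n\}$, let $J_i(U):=\{j\in\{1,\ldots,k\}:U\cap(F^{(j)})_i\neq\emptyset\}$ record the indices of the $(k-1)$-faces of $\Delta_k$ in direction $i$ met by $U$. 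The singleton case of the separating property forces $U$ to avoid, in direction $i$, the vertex opposite $F^{(j)}$ for every $j\in J_i(U)$.

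Next I would choose for each pair $(U,i)$ a probability vector $v_i(U)\in\Delta_k$ supported on the vertices $\{e_j:j\in J_i(U)\}$ (say their barycenter) and define $f_i(x):=\sum_{U}\phi_U(x)\,v_i(U)$ and $f:=(f_1,\ldots,f_n)\colon\Delta_k^n\to\Delta_k^n$. The multi-element case of the separating condition then translates into a facial invariance property for $f$: if $x$ sits on the facet $(F^{(j_0)})_i$, only those $U$'s with $j_0\in J_i(U)$ contribute to $f_i(x)$, each corresponding $v_i(U)$ lies on $(F^{(j_0)})_i$, and hence so does $f_i(x)$. Iterating, $f$ sends every intersection of facets of $\Delta_k^n$ into itself. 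A straight-line homotopy in each simplex factor gives a face-preserving homotopy from $f$ to $\mathrm{Id}_{\Delta_k^n}$, so by the absence of a retraction $\Delta_k^n\to\partial\Delta_k^n$ the map $f$ must be surjective.

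On the other hand $f$ factors through the geometric realization of the nerve $K(\alpha)$, whose dimension is at most $\mathrm{ord}(\alpha)<nk$. The final contradiction has to come from showing that no face-preserving surjection $\Delta_k^n\to\Delta_k^n$ can factor through a polyhedron of dimension strictly less than $nk$. This is the main obstacle: a continuous surjection alone only gives the naive bound $\dim K(\alpha)\ge\dim\Delta_k^n=n(k-1)$, and upgrading from $n(k-1)$ to $nk$ requires using the extra combinatorial content of the separating condition — specifically, that each of the $nk$ (facet, opposite-vertex) pairs of $\Delta_k^n$ yields an \emph{independent} obstruction (in contrast to the cube $[0,1]^n$, whose $2n$ facets pair into only $n$ opposite pairs). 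I expect this improvement to follow from an iterated Sperner/KKM-style argument along the $n$ coordinate directions, exploiting the full $k$-vertex structure of each $\Delta_k$-factor to extract $k$ independent obstructions per direction rather than the $k-1$ available from topological dimension alone.
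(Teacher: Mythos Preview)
Your face-preservation claim is false as written. If $x\in(F^{(j_0)})_i$ and $U\ni x$, then indeed $j_0\in J_i(U)$; but this means the vertex $e_{j_0}$ opposite $F^{(j_0)}$ \emph{is} in the support of your $v_i(U)$, so $v_i(U)$ has positive $j_0$-coordinate and therefore does \emph{not} lie in $F^{(j_0)}=\{x_{j_0}=0\}$. You have the direction reversed: a face-preserving map would require supporting $v_i(U)$ on the complement $\{e_j:j\notin J_i(U)\}$ instead (the separating hypothesis, applied to the constant family $U_j\equiv U$ with $F^j=F^{(j)}$ for $j\in J_i(U)$, shows this complement is nonempty). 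With that fix your homotopy-to-identity and nerve-factorisation argument does go through and yields $\mathrm{ord}(\alpha)\ge n(k-1)$.

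The paper's proof in fact uses labels pointing the same way your original $v_i(U)$ does: it picks for each $(U,i)$ a \emph{single} vertex $\overline{F}$ with $U\cap F_i\neq\emptyset$ (or the barycenter $*$ when $J_i(U)=\emptyset$) and sets $g=\sum_U f_U\,\phi^\alpha(U)$. But instead of face-preservation it exploits the opposite phenomenon: if $g_i(x)$ lands in a proper face of $\Delta_k$, then every contributing $U\ni x$ has $\phi^\alpha_i(U)=\overline{F^U}$ for some facet $F^U$ it meets, so $g_i(x)\in\overline{\cap_U F^U}$, and the separating condition forbids $x$ from lying in $\left(\overline{\cap_U F^U}\right)_i$---hence $g(x)\neq x$ whenever $g(x)\in\partial\Delta_k^n$. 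Combined with the nerve-dimension count (the image of $g$ sits in finitely many simplices of dimension at most $\mathrm{ord}(\alpha)$, hence misses an interior point once $\mathrm{ord}(\alpha)<n(k-1)$), a retraction and Brouwer's theorem give the contradiction. Note that the paper's own dimension step only functions under the assumption $\mathrm{ord}(\alpha)<n(k-1)$; since $\dim\Delta_k^n=n(k-1)<nk$, the stated bound $nk$ appears to be a slip (and $n(k-1)$ already suffices for the application), so your ``upgrade'' obstacle is illusory and no Sperner/KKM refinement is needed.
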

\begin{proof} We argue by contradiction by considering a separating open cover $\alpha$ of $\Delta_k^n$ with order less than $nk$. 
Let $(f_U)_{U\in \alpha}$ be a partition of unity associated to $\alpha$, i.e. $f_U:\Delta_k^n\rightarrow [0,1]$ is a continuous function supported on $U$ for any $U\in \alpha$ and $\sum_{U\in \alpha}f_U=1$.  
We define a function $\phi^\alpha=(\phi_1^\alpha,\cdots, \phi^\alpha_n):\alpha\rightarrow \Delta_k^n$ as follows:  for any $U\in \alpha$ and any $i\in\{1,\cdots,n\}$ we let 
\begin{equation*}
\phi^\alpha_i(U):=
\begin{cases}
*, &\text{ if } U\cap F_i=\emptyset \text{ for any $(k-1)$-face $F$}, \\
&\\
\underbrace{\{0\}\times \cdots \{0\}}_{i-1}\times \overline{F}\times \underbrace{ \{0\}\times \cdots  \{0\}}_{n-i}, &\text{ for some  $(k-1)$-face $F$ with $F_i\cap U\neq \emptyset$}.
\end{cases}
\end{equation*}
Note that $\phi^\alpha$ is not uniquely defined in this way as $U$ may intersect different faces of $F_i$, but it does not affect the proof. 
  We define a function $g:\Delta_k^n\circlearrowleft$ by   $$\forall
  x\in \Delta_k^n, \ \ g(x)=\sum_{U\in \alpha} \phi^\alpha(U)f_U(x).$$

  Since the order of $\alpha$ is less than $nk$ the function  $g$ takes values in a finite union of $(nk-1)$-dimensional convex subsets of $\Delta_k^n$.  Therefore there is  a point 
$\overline{x}$ in the interior of $\Delta_k^n$ which does not lie in the image of $g$. Let $r: \Delta_k^n\setminus \{\overline{x}\}\rightarrow \partial\Delta_k^n $ be a retraction map on the boundary $\partial\Delta_k^n$ of $\Delta_k^n$.

Notice that if $g(x)$ does not lie in the interior of $\Delta_k^n$ then there is $i\in \{1,\cdots, n\}$ and  a family $\mathcal{F}$ of $(k-1)$-faces such that :
\begin{itemize}
\item for each element $U$ of $\alpha$ containing  $x$ there is a $(k-1)$-faces $F\in \mathcal{F}$ with $U\cap F_i\neq \emptyset$, 
\item $g(x)$ belongs to  $\left(\overline{\cap_{F\in \mathcal{F}} F}\right)_i$. 
\end{itemize} As the cover $\alpha$ is assumed to be separating, we have in particular $g(x)\neq x$ for any $x\in \partial\Delta_k^n$. 
Therefore the restriction of  $g$  to the boundary of $\Delta_k^n$ has no fixed point. Finally the map $r\circ g :\Delta_k^n:\circlearrowleft$ has also no fixed point.  This contradicts Brower's fixed point theorem.
\end{proof}

\subsection{A multi-affine embedding of the generalized cube $\Delta_k^n$ in the simplex $\Delta_{k^n}$ }\label{secemb}
To simplify the notations we let $\llbracket n \rrbracket:=\{1,\cdots, n\}$ for any positive integer $n$.
The generalized cube $\Delta_k^n$ may be embedded  in the simplex $\Delta_{k^n}$  by the following map:
\begin{equation}
\begin{split}
\Theta: \Delta_{k}^{n} & \to \Delta_{k^n},\\
(t_m)_{m\in \llbracket n \rrbracket} & \mapsto \sum_{\mathbf i= (i_m)_{m\in \llbracket n \rrbracket} }t_{\mathbf i}{e_{\mathbf i}} ,
\end{split}
\end{equation}

where $e_{\mathbf i}$ is a standard basis of $\R^{k^n}$ and  $t_{\mathbf i}=\prod_{m\in\llbracket n \rrbracket}t_{m,{i_m}}$ with $t_m=(t_{m,i})_{i\in \llbracket k \rrbracket}\in \Delta_{k}$ for any $m\in \llbracket k \rrbracket$.

\begin{figure}[h]
\begin{center}
\includegraphics[scale=0.22]{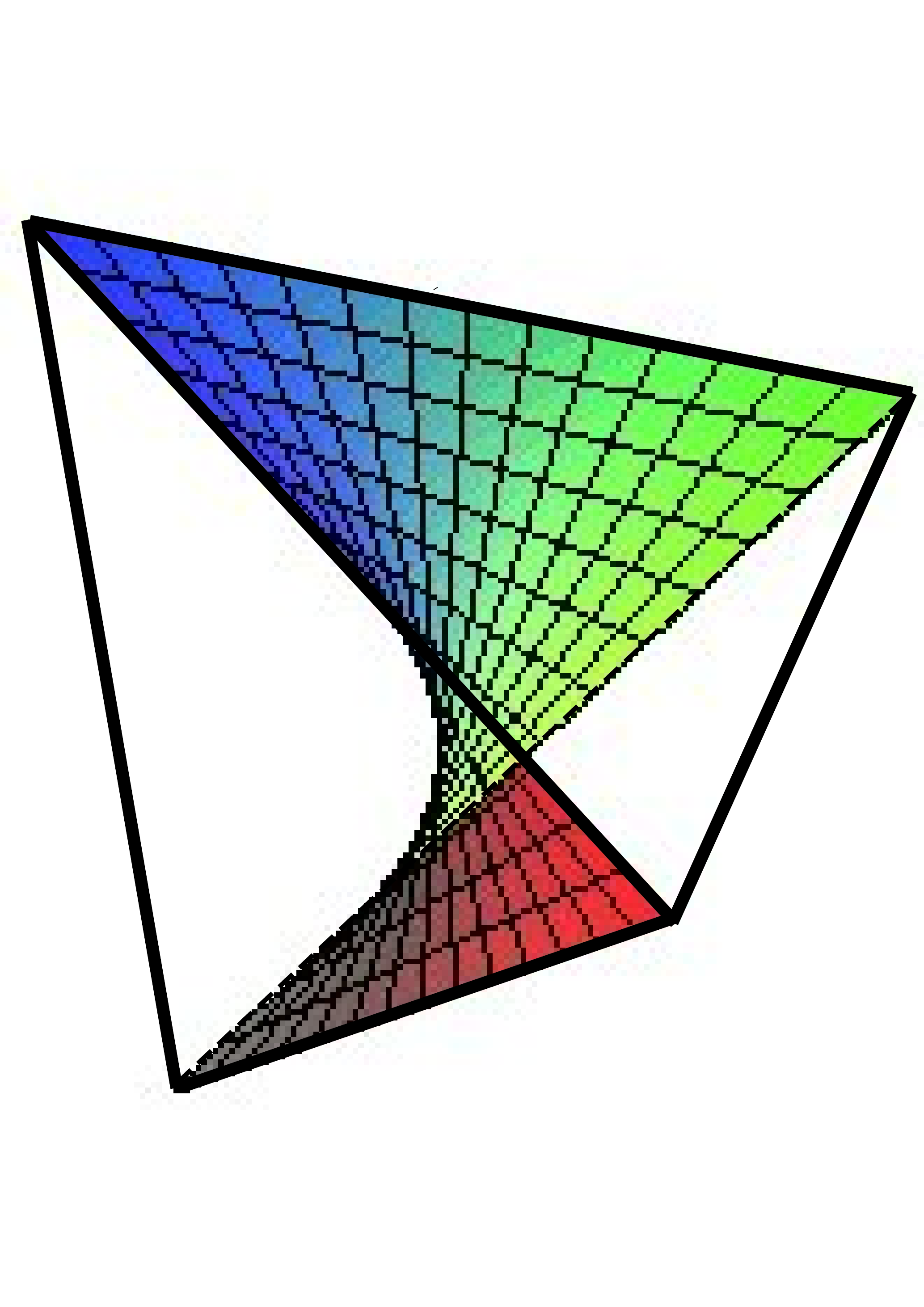}
\end{center}
\caption{\label{etiquette} The hyperbolic paraboloid inside the simplex $\Delta_4$.}
\end{figure}

Clearly $\Theta$ is continuous and injective. It is not hard to check that the embedding $\Theta$ is not affine, but {\it multi-affine}, i.e. it is affine separately in each variable $t_m$.  In particular the image $S$ of $\Delta_k^n$ under $\Theta$ is not convex, but its faces may still be joined by segment lines inside  $S$.  For example if one takes $n=k=2$, then the image of $\Theta$ is a hyperbolic paraboloid, which is a doubly ruled two-dimension surface in $\Delta_4$ (see Figure \ref{etiquette}).

\section{Proof of Main Theorem}

In this section, we prove our main theorem.
To this end, we shall  prove the mean dimension of the induced map $T_*^m$ grows exponentially fast in $m$, whenever $(X,T)$ has positive topological entropy.  Recall that $W$ denotes the $1$-Wasserstein distance.  We  let $W_n^m$ be the $n$-dynamical $1$-Wasserstein
distance for $T_*^m$, i.e. $$\forall \mu, \nu\in \MM(X), \ W_n^m(\mu, \nu)=\max_{0\leq k<n}W(T_*^{km}\mu,T_*^{km}\nu).$$
Remark that $W_n^1$ may differ from the Wasserstein distance $W_{d_n}$ associated to the dynamical distance $d_n$. In general we only have $W_n^1\leq W_{d_n}$ (see Proposition 2.1 in  \cite{chen2020}).

Suppose $(X,T)$ is a dynamical system with positive topological entropy. Let $(U_0,U_1)$ be the IE-pair and $I$ be the independence set of $(X,T)$ as in Lemma \ref{keli}.  We state below the main proposition towards the Main Theorem.
\begin{prop}\label{lastt}
For any $m\ge 1$ there is some $\epsilon_m>0$ such that
\begin{equation}\label{prop}\dim_{\epsilon_m}(\MM(X), W_n^m)\geq 2^{q_m}\sharp I^m_n.
\end{equation}
\end{prop}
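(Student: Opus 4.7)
The plan is to embed the generalized cube $\Delta_{2^{q_m}}^N$, with $N:=\sharp I^m_n$, continuously into $\MM(X)$ via a multi-affine map $\Psi$ built from the IE-pair, to prove a coordinate-wise Wasserstein lower bound, and then invoke the generalized Lebesgue Lemma~\ref{Leb} to conclude. Write $I^m_n=\{k_1<\cdots<k_N\}$, and for each $i$ pick $J_i\subseteq I\cap[k_im,(k_i+1)m)$ with $\sharp J_i=q_m$. Since $\bigcup_iJ_i\subseteq I$, Lemma~\ref{keli} yields, for every word $\tilde\zeta=(\zeta_i)_i\in\prod_i\{0,1\}^{J_i}$, a point $y_{\tilde\zeta}\in\bigcap_i\bigcap_{j\in J_i}T^{-j}U_{\zeta_i(j)}$. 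Identifying the vertex set of the $i$-th factor $\Delta_{2^{q_m}}$ with $\{0,1\}^{J_i}$, I set
$$\Psi\bigl((a_i)_i\bigr)=\sum_{\tilde\zeta}\Bigl(\prod_{i=1}^N a_{i,\zeta_i}\Bigr)\delta_{y_{\tilde\zeta}},$$
the composition of the multi-affine embedding $\Theta$ of Section~\ref{secemb} (with $k=2^{q_m}$, $n=N$) with the affine map sending each vertex of $\Delta_{k^N}$ to its associated Dirac mass.

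Key estimate. Let $\delta:=d(\overline{U_0},\overline{U_1})>0$ and, using uniform continuity of $T^s$ on compact $X$ for $0\le s<m$, choose $\tilde\delta_m>0$ such that $d(x,y)\le\tilde\delta_m$ implies $d(T^sx,T^sy)<\delta$ for all such $s$. I will show that for every $\mathbf a,\mathbf b\in\Delta_{2^{q_m}}^N$, every coordinate $i\in\{1,\dots,N\}$, and every vertex $v$ of the $i$-th factor,
$$W_n^m(\Psi(\mathbf a),\Psi(\mathbf b))\ge |a_{i,v}-b_{i,v}|\,\tilde\delta_m.$$
To see this I apply $T_*^{k_im}$ and introduce $B_v:=\bigcap_{j\in J_i}T^{-(j-k_im)}\overline{U_{v(j)}}$. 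Multi-affinity of $\Psi$ together with the IE property gives $T_*^{k_im}\Psi(\mathbf a)(B_v)=a_{i,v}$ and likewise for $\mathbf b$, while the defining property of $\tilde\delta_m$ forces every atom $T^{k_im}y_{\tilde\zeta}$ with $\zeta_i\ne v$ to lie at $d$-distance $>\tilde\delta_m$ from $B_v$: some $j\in J_i$ witnesses $\zeta_i(j)\ne v(j)$, so $T^{j-k_im}$ sends this atom into $U_{\zeta_i(j)}$ and every point of $B_v$ into $\overline{U_{v(j)}}$, two sets at $d$-distance $\ge\delta$. A transport-plan argument, noting that any coupling must move mass $|a_{i,v}-b_{i,v}|$ between $B_v$ and the rest of the support, then gives the bound on $W\le W_n^m$.

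Finish via generalized Lebesgue. Set $\epsilon_m:=2^{-q_m}\tilde\delta_m$ and let $f\colon\MM(X)\to Y$ be any continuous $(W_n^m,\epsilon_m)$-injective map; assume toward a contradiction that $\dim Y<2^{q_m}N$. A standard Lebesgue-number argument using the $\epsilon_m$-injectivity of $f$ produces a finite open cover $\mathcal B$ of $f(\MM(X))$ of order $\le\dim Y$ for which each $f^{-1}(B)$ has $W_n^m$-diameter $<\epsilon_m$; the pullback $\alpha:=(f\circ\Psi)^{-1}\mathcal B$ is then an open cover of $\Delta_{2^{q_m}}^N$ of order $<2^{q_m}N$. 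I check that $\alpha$ is separating: otherwise, at some coordinate $i$ there exist distinct $(k-1)$-faces $(F^j)_{j\in J}$ with excluded vertices $(v^j)_{j\in J}$, witnesses $\mathbf b^{(j)}\in V_j\cap F^j_i$ (so $b^{(j)}_{i,v^j}=0$), and $\mathbf a\in\bigcap_{j\in J}V_j$ with $a_{i,\cdot}$ supported on $\{v^j:j\in J\}$; because $|J|\le 2^{q_m}$, pigeonhole provides $j^*\in J$ with $a_{i,v^{j^*}}\ge 2^{-q_m}$, and the key estimate then forces $W_n^m(\Psi(\mathbf a),\Psi(\mathbf b^{(j^*)}))\ge\epsilon_m$, contradicting $\mathbf a,\mathbf b^{(j^*)}\in V_{j^*}$. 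Lemma~\ref{Leb} now gives $\text{ord}(\alpha)\ge 2^{q_m}N$, the desired contradiction.

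The main obstacle is the key estimate: the Wasserstein distance of $T_*^{k_im}$-shifts must simultaneously decode every individual coordinate $a_{i,v}$. This is precisely what forces the tensor-product form of $\Psi$, the pairwise disjointness of the blocks $J_i$ (so the IE condition factorizes across coordinates), and the uniform-continuity choice of $\tilde\delta_m$, the last of which is what keeps the whole argument valid for a general continuous $T$ rather than only the Lipschitz case.
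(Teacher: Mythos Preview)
Your proof is correct and follows the same architecture as the paper's: build the multi-affine embedding $\Psi$ (the paper's $\Xi=\Psi\circ\Theta$) of $\Delta_{2^{q_m}}^{N}$ into $\MM(X)$ from the IE-data, show that any pulled-back cover of small $W_n^m$-diameter is \emph{separating}, and then invoke the generalized Lebesgue Lemma~\ref{Leb}.

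The genuine difference is in how the separating property is verified. The paper proves two-sided face estimates (Lemma~\ref{tech}): a lower bound $W_n^m(\mu,F_i)\ge \mu(S_{\overline F_i})\gamma_m$ and an upper bound $W_n^m(\mu,F_i)\le \mu(S_{\overline F_i})\diam_d(X)$, and then chains them through Lemma~\ref{inter} (closeness to each $F^j_i$ forces closeness to $\bigcap_jF^j_i$, hence distance $\ge\gamma_m/2$ from the opposite face). This two-step passage is why the paper's scale is $\epsilon_m=\gamma_m^2/(2^{q_m+1}\diam_d(X))$. You bypass this by proving the single coordinate-wise lower bound $W_n^m(\Psi(\mathbf a),\Psi(\mathbf b))\ge |a_{i,v}-b_{i,v}|\,\tilde\delta_m$ (essentially a pointwise refinement of Lemma~\ref{tech}(1), via the same use of Lemma~\ref{ffact}/transport and the same uniform-continuity constant $\tilde\delta_m=\gamma_m$), and then argue directly: a point $\mathbf a$ on the opposite face $(\overline{\cap_jF^j})_i$ has its $i$-th barycentric vector supported on the excluded vertices $\{v^j\}$, so by pigeonhole some $a_{i,v^{j^*}}\ge 2^{-q_m}$, while any witness $\mathbf b^{(j^*)}\in F^{j^*}_i$ has $b^{(j^*)}_{i,v^{j^*}}=0$. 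Your route is shorter, avoids the upper bound in Lemma~\ref{tech}(2) and Lemma~\ref{inter} entirely, and yields the larger scale $\epsilon_m=2^{-q_m}\tilde\delta_m$ (linear in $\gamma_m$, no $\diam_d(X)$); this would in fact sharpen the exponent $\alpha$ obtained later in Theorem~\ref{thm:rate m}.
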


We first conclude the proof of the Main Theorem assuming Proposition \ref{lastt}.
\begin{proof}[Proof of the  Main Theorem]
By taking $n\to +\infty$ in (\ref{prop}) we  get the following inequality according to Lemma \ref{comb}:
$$\lim_{n\to +\infty} \frac{\dim_{\epsilon_m}(\MM (X), W^m_n)}{n} \geq  2^{q_m-1}\cdot \underline{d}(I).$$
Then by (\ref{metricdef}) we have 
\begin{align*}
\mdim(\MM(X), T_*^m)&\geq \lim_{n\to \infty} \frac{\dim_{\epsilon_m}(\MM (X), W^m_n)}{n},\\
&\geq 2^{q_m-1}\cdot \underline{d}(I).
\end{align*}
Finally by using the formula (\ref{power}) of the mean dimension of powers, we conclude that (recall that $q_m$ is equal to $\lfloor  \frac{m\cdot\underline{d}(I)}{2} \rfloor$):
\begin{align*}
\mdim(\MM(X), T_*)&= \frac{\mdim(\MM(X),T_*^m)}{m},\\
&\geq \frac{\mdim_{W}(T_*^m,\epsilon_m)}{m},\\
&\geq \lim_{n\to \infty} \frac{\dim_{\epsilon_m}(\MM (X), W^m_n)}{n},\\
&\geq \frac{2^{q_m-1}}{m}\cdot \underline{d}(I)\xrightarrow{m\rightarrow +\infty}+\infty.
\end{align*}
This completes the proof.
\end{proof}
The  end of this section is devoted to the proof of Proposition \ref{lastt}. Fix $m\ge 1$. For any $k\in I^m$ we let $E_k$ be the $q_m$ first integers in $[km, (k+1)m[\cap I$ and we consider a bijection  $$\psi_k:\llbracket 2^{q_m} \rrbracket \rightarrow \{0,1\}^{E_k}.$$
Let $n\ge 1$. Since $I$ is the independent set of $(X,T)$, we can pick, for each sequence $\mathbf{i}=(i_k)_{k\in I_n^m}\in \llbracket 2^{q_m} \rrbracket ^{I_n^m}$, a point $x_{\mathbf{i}}$ with  $$x_{\mathbf{i}}\in\displaystyle{\bigcap_{ \substack{k\in I_n^m,\\ \ell\in E_k}} T^{-\ell}U_{a_\ell^k}} \text{ where } \psi_k(i_k)=(a_\ell^k)_{\ell\in E_k}.$$
Let us denote by $\delta_x$ the Dirac measure at $x\in X$. Let $K_n$ inside $\MM(X)$ be the $(2^{q_m})^{\sharp I_n^m}$-simplex given by the  convex hull of all $\delta_{x_{\mathbf i}}$ for $\mathbf{i}\in \llbracket 2^{q_m} \rrbracket ^{I_n^m}$. 
To prove Proposition \ref{prop} it is enough to show that
$$\dim_{\epsilon_m}(K_n, T_*^m)\geq  2^{q_m} \sharp I_n^m.$$

We consider the affine homeomorphism 
\begin{equation}
\begin{split}
\Psi: \Delta_{2^{q_m \sharp I_n^m}} & \to K_n,\\
(t_\mathbf i)_{\mathbf i\in \llbracket 2^{q_m} \rrbracket ^{I_n^m}} & \mapsto \sum_{\mathbf i }t_{\mathbf i}\delta_{x_{\mathbf i}}.
\end{split}
\end{equation}
We let $\Xi=\Psi\circ \Theta$ with $\Theta$ being the multi-affine embedding of $\Delta_{2^{q_m}}^{ \sharp I_n^m}$ in $\Delta_{2^{q_m \sharp I_n^m}}$ defined in Subsection \ref{secemb}. Notice that $\Xi$ is also a multi-affine embedding. We denote the image of $\Xi$ by $L_n$. 
For $E\subset \MM(X)$, we put $S_E:=\bigcup_{\mu\in E}  \supp(\mu)$. Note that $S_{K_n}$ consists of finitely many points $x_{\mathbf i}$ for $\mathbf i\in \llbracket 2^{q_m} \rrbracket ^{I_n^m}$.  We have the following decomposition of the measure in $L_n$.

\begin{lem}\label{decomposition}
Let $1\le \ell\le k-1$. Let $F$  be a $\ell$-face of $\Delta_{2^{q_m}}$ and $\overline{F}$ be its opposite $(2^{q_m}-\ell)$-face. Let $i\in I_n^m$. For any  $\mu\in L_n$,  there is $\lambda\in [0,1]$, $\mu'\in \Xi(F_i)$ and $\mu''\in \Xi(\overline{F}_i)$ with 
$$\mu=\lambda  \mu'+(1-\lambda)\mu'',$$
where $\mu(S_{\Xi(F_i)})=\lambda$ and $\mu(S_{\Xi(\overline{F}_i)})=1-\lambda$.
\end{lem}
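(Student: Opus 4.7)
The plan is to exploit the multi-affinity of $\Xi=\Psi\circ\Theta$ in the $i$-th coordinate, combined with the fact that the atoms $\{x_{\mathbf i}\}_{\mathbf i\in\llbracket 2^{q_m}\rrbracket^{I_n^m}}$ used to build $K_n$ are pairwise distinct. The required convex decomposition of $\mu$ will come from splitting only the $i$-th factor $t_i$ of the parameter $t=(t_m)_m\in\Delta_{2^{q_m}}^{I_n^m}$ along the faces $F$ and $\overline F$ of $\Delta_{2^{q_m}}$.

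First I would check pairwise distinctness of the $x_{\mathbf i}$. Given $\mathbf i\neq\mathbf i'$, some coordinate $k\in I_n^m$ satisfies $i_k\neq i'_k$, hence $\psi_k(i_k)\neq\psi_k(i'_k)$, so there is $\ell\in E_k$ on which the iterates $T^\ell x_{\mathbf i}$ and $T^\ell x_{\mathbf i'}$ lie in the disjoint closed sets $\overline{U_0}$ and $\overline{U_1}$; in particular $x_{\mathbf i}\neq x_{\mathbf i'}$. Consequently $\Xi(t)(\{x_{\mathbf j}\})=\prod_m t_{m,j_m}$ for every $\mathbf j$. Next, I would write $F=\{x\in\Delta_{2^{q_m}}:x_j=0\text{ for }j\notin J\}$ for some $J\subset\llbracket 2^{q_m}\rrbracket$ of cardinality $\ell$, so that $\overline F$ corresponds to $J^c$, and set $\lambda:=\sum_{j\in J}t_{i,j}$. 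When $\lambda\in(0,1)$, the convex decomposition $t_i=\lambda t_i'+(1-\lambda)t_i''$ with $t_i'\in F$, $t_i''\in\overline F$ is immediate; the endpoint cases $\lambda\in\{0,1\}$ are handled by picking any element on the inactive side.

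Because $\Theta$ is multi-affine and $\Psi$ is affine, $\Xi$ is affine in the $i$-th coordinate, so inserting the above split into the $i$-th slot yields $\mu=\lambda\mu'+(1-\lambda)\mu''$ with $\mu'\in\Xi(F_i)$ and $\mu''\in\Xi(\overline F_i)$. To obtain the mass identities, I would identify the supports: distinctness of the $x_{\mathbf j}$ gives $S_{\Xi(F_i)}=\{x_{\mathbf j}:j_i\in J\}$ and $S_{\Xi(\overline F_i)}=\{x_{\mathbf j}:j_i\in J^c\}$, two disjoint subsets of the finite atom set, and the factorization $\sum_{\mathbf j:\,j_i\in J}\prod_m t_{m,j_m}=\bigl(\sum_{j\in J}t_{i,j}\bigr)\prod_{m\neq i}\bigl(\sum_{j}t_{m,j}\bigr)=\lambda$ closes the argument. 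I expect the only non-formal step to be identifying these supports as disjoint, which genuinely relies on the IE-pair construction of the $x_{\mathbf i}$ via the property $\overline{U_0}\cap\overline{U_1}=\emptyset$; the remainder is a routine multi-affine computation.
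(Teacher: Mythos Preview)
Your proposal is correct and follows essentially the same route as the paper: decompose the $i$-th coordinate $t_i$ of the parameter $t$ along the complementary faces $F,\overline F$, then invoke the multi-affinity of $\Xi$ in that coordinate to obtain $\mu=\lambda\mu'+(1-\lambda)\mu''$. The paper's proof is in fact terser than yours --- it does not explicitly verify the mass identities $\mu(S_{\Xi(F_i)})=\lambda$ and $\mu(S_{\Xi(\overline F_i)})=1-\lambda$, nor the pairwise distinctness of the $x_{\mathbf i}$ on which those identities rely --- so your treatment of these points is a welcome addition rather than a deviation.
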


\begin{proof}Let $F$ be a $\ell$-face of $\Delta_{2^{q_m}}$ and $i\in I_n^m$. Suppose $\mu=\Xi(t)$ for some $t\in \Delta_{2^{q_m}}^{I_n^m}$. 
Since the simplex $\Delta_{2^{q_m}}$ is the convex hull of $F$ and its opposite $(2^{q_m}-\ell)$-face $\overline{F}$, the generalized cube $\Delta_{2^{q_m}}^{I_n^m}$ is the convex hull of $F_i$ and $\overline{F}_i$.
Then we write $t=\lambda  t'+(1-\lambda)t''$ for some $\lambda\in [0,1]$, $t'\in F_i$ and $t''\in \overline{F}_i$.  As $\Xi$ is affine in the $i^{th}$ coordinate, we conclude that 
\begin{align*}
\mu=\lambda  \Xi(t')+(1-\lambda)\Xi(t'').
\end{align*}
\end{proof}

In the following we identify $L_n$ and the  generalized cube  $\Delta_{2^{q_m}}^{I_n^m}$ via the embedding $\Xi$. In particular we will talk about the faces of $L_n$ as the images by $\Xi$ of the faces of $\Delta_{2^{q_m}}^{I_n^m}$.
 For $m\in \mathbb N$, by continuity of $T$, there is $\gamma_m>0$ such that $$\left[d(x,y)<\gamma_m\right]\Rightarrow [d(T^kx, T^ky)<d(U_0,U_1),\forall k=0,\cdots, m-1].$$

\begin{lem}\label{tech}
Let  $\mu\in L_n$ and  $i\in I_n^m$. For any face $F$ of $\Delta_{2^{q_m}}$ we have 
\begin{enumerate}
\item 
$$W_n^m(\mu, F_i)\geq \mu(S_{\overline{F}_i}) \gamma_m,$$ 
 in particular, 
\begin{align*}
W_n^m\left(F_i, \overline{F}_i\right) 
\geq \gamma_m.
\end{align*}
\item 
$$W_n^m(\mu, F_i)\leq \diam_d(X) \mu(S_{\overline{F}_i}).$$ 
\end{enumerate}
\end{lem}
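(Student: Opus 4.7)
The plan is to handle (2) and (1) separately, both starting from Lemma \ref{decomposition} applied to the complementary faces $F_i, \overline{F}_i$: any $\mu \in L_n$ decomposes as $\mu = \lambda\mu' + (1-\lambda)\mu''$ with $\mu' \in F_i$, $\mu'' \in \overline{F}_i$, $\lambda = \mu(S_{F_i})$ and $1-\lambda = \mu(S_{\overline{F}_i})$; in particular $\supp(\mu) \subseteq S_{F_i} \cup S_{\overline{F}_i}$. Item (2) is the soft direction: since $\mu' \in F_i$, it suffices to bound $W_n^m(\mu,\mu')$, and the Kantorovich-Rubinstein duality together with linearity in the second marginal yields, for any $0 \le k < n$,
\[
W(T_*^{km}\mu, T_*^{km}\mu') \;=\; (1-\lambda)\, W(T_*^{km}\mu'', T_*^{km}\mu') \;\le\; (1-\lambda)\, \diam_d(X),
\]
using $\diam_W(\MM(X)) \le \diam_d(X)$; taking the max over $k$ gives $W_n^m(\mu, F_i) \le \mu(S_{\overline{F}_i})\,\diam_d(X)$.

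Item (1) is the substantive half. The plan is to use the single push-forward time $k = i$ and show that $T^{im}$ already separates $S_{F_i}$ from $S_{\overline{F}_i}$ by at least $\gamma_m$. Given $x_{\mathbf j} \in S_{F_i}$ and $x_{\mathbf{j}'} \in S_{\overline{F}_i}$, their $i$-th coordinates $j_i, j'_i$ lie in complementary subsets of $\llbracket 2^{q_m}\rrbracket$, so bijectivity of $\psi_i$ yields $\psi_i(j_i) \neq \psi_i(j'_i)$, hence there is some $\ell \in E_i \subset [im, (i+1)m)$ with $(\psi_i(j_i))_\ell \neq (\psi_i(j'_i))_\ell$. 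The defining property of the $x_{\mathbf j}$'s then places $T^\ell x_{\mathbf j}$ and $T^\ell x_{\mathbf{j}'}$ in the two opposite sets $U_0$ and $U_1$, giving $d(T^\ell x_{\mathbf j}, T^\ell x_{\mathbf{j}'}) \ge d(U_0, U_1)$. Writing $\ell = im + r$ with $r \in [0, m)$ and invoking the contrapositive of the choice of $\gamma_m$, we deduce $d(T^{im} x_{\mathbf j}, T^{im} x_{\mathbf{j}'}) \ge \gamma_m$ uniformly in $\mathbf j, \mathbf{j}'$.

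Setting $A := T^{im}(S_{F_i})$ and $A' := T^{im}(S_{\overline{F}_i})$, the above yields $d(A, A') \ge \gamma_m$ and in particular $A \cap A' = \emptyset$. For any $\nu \in F_i$, $T_*^{im}\nu$ is supported on $A$, while $T_*^{im}\mu$ is supported on $A \cup A'$ with $(T_*^{im}\mu)(A') = \mu(S_{\overline{F}_i})$. Applying Lemma \ref{ffact} to the pair $(T_*^{im}\mu, T_*^{im}\nu)$ with $S = A \cup A'$ and $S' = A$ then gives
\[
W(T_*^{im}\mu, T_*^{im}\nu) \;\ge\; \mu(S_{\overline{F}_i})\, \gamma_m,
\]
hence $W_n^m(\mu, \nu) \ge \mu(S_{\overline{F}_i})\,\gamma_m$, and (1) follows upon taking the infimum over $\nu \in F_i$; the ``in particular'' clause is the specialisation $\mu \in \overline{F}_i$. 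The main obstacle lies in the assertion that \emph{one} push-forward time suffices to separate every pair of source points at once: this is exactly what the IE-pair property combined with the bijectivity of $\psi_i$ supplies, while $\gamma_m$ is the device bridging the gap between the possibly off-grid IE-times $\ell \in E_i$ and the sample time $im$ at which $W_n^m$ probes.
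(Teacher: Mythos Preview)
Your proof is correct and follows essentially the same approach as the paper's: both arguments start from the decomposition of Lemma \ref{decomposition}, use Lemma \ref{ffact} at the single push-forward time $im$ after establishing $d(T^{im}S_{F_i},T^{im}S_{\overline{F}_i})\ge\gamma_m$ via the IE-pair property and the defining choice of $\gamma_m$, and handle item (2) by the convexity identity $W_n^m(\mu,\mu')=(1-\lambda)W_n^m(\mu'',\mu')$. Your write-up is in fact more explicit than the paper's about why the $i$-th coordinates of points in $S_{F_i}$ and $S_{\overline{F}_i}$ differ and how the contrapositive of the $\gamma_m$-property bridges from the IE-time $\ell\in E_i$ to the grid time $im$.
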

\begin{proof}
We write $
\mu$ as  $$\mu=\beta\mu'+(1-\beta)\mu''$$ with $\mu''\in F_i$, $
\mu'\in \overline{F}_i$ and $\beta=\mu\left(S_{\overline{F}_i}\right)$ as in Lemma \ref{decomposition}. 
\begin{enumerate}
\item  

If $x_\mathbf{i}$ and $x_{\mathbf i'}$ belong respectively to $S_{\overline{F}_i}$ and $ S_{F_i}$, then there exists $0\leq k<m$ with $T^{im+k}x_{\mathbf i}\in U_j$ and $T^{im+k}x_{\mathbf i'}\in U_{j'}$, $j\neq j'\in \{0,1\}$. By definition of $\gamma_m$ we have $$d(T^{im}x_{\mathbf i}, T^{im}x_{\mathbf i'})\geq \gamma_m.$$
It follows that 
$$
d(T^{im}S_{\overline{F}_i}, T^{im}S_{F_i})\ge \gamma_m.
$$
Consequently by Lemma \ref{ffact}  we have for all $\nu\in F_i $ :
\begin{align*}
W_n^m(\mu,\nu )&\geq W(T^{im}\mu, T^{im}\nu),\\
&\geq \beta d(T^{im}S_{\overline{F}_i}, T^{im}S_{F_i}), \\
&\geq \beta \gamma_m.
\end{align*}
Therefore $W_n^m(\mu, F_i)\geq \mu(S_{\overline{F}_i}) \gamma_m.$
Finally, notice that if $\mu \in \overline{F}_i$, then $\beta=1$. This implies that $W_n^m\left(F_i, \overline{F}_i\right) 
\geq \gamma_m.$
\item 
 Since $\mu''\in  F_i$, we have
\begin{align*}
W_n^m(\mu,F_i )&\leq 
W_n^m(\mu, \mu''),\\
&\leq \beta \cdot W_n^m(\mu',\mu''),\\
&\leq \beta \cdot \diam_d(X).
\end{align*}
\end{enumerate}
\end{proof}

Now we let 
\begin{equation}\label{eplison}
\epsilon_m= \frac{\gamma_m^2}{\diam_d(X)2^{q_m+1}}.
\end{equation}

\begin{lem}\label{inter}
Let $i\in I^m_n$. Then for any $\mu\in L_n$ and any family $\mathcal F$ of $(2^{q_m}-1)$-faces of $\Delta_{2^{q_m}}$ with $W^m_n(\mu,F_i)<\epsilon_m$ for all $F\in \mathcal F$, we have 
$$W^m_n(\mu,\bigcap_{F\in \mathcal F}F_i)\leq \frac{\gamma_m}{2}.$$
\end{lem}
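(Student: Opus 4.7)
The plan is to recognize that $\bigcap_{F \in \mathcal F} F_i$ is itself a single face of $L_n$, namely $(F')_i$ with $F' := \bigcap_{F \in \mathcal F} F$ the $(2^{q_m} - \sharp \mathcal F)$-face of $\Delta_{2^{q_m}}$ cut out by the given codimension-one faces, and then to sandwich the Wasserstein distance from $\mu$ to $(F')_i$ between the two complementary inequalities of Lemma \ref{tech}: the first turns a small $W_n^m$-distance from $\mu$ to $F_i$ into a small $\mu$-mass on the opposite face $\overline{F}_i$, and the second turns a small $\mu$-mass on $\overline{F'}_i$ back into a small $W_n^m$-distance from $\mu$ to $(F')_i$.

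First, for each $F \in \mathcal F$, the hypothesis $W_n^m(\mu, F_i) < \epsilon_m$ combined with Lemma \ref{tech}(1) gives
$$\mu\bigl(S_{\overline{F}_i}\bigr) < \frac{\epsilon_m}{\gamma_m}.$$
The key geometric observation is that distinct $(2^{q_m}-1)$-faces $F$ are parameterized by distinct vertices $a_F$ of $\Delta_{2^{q_m}}$ (the unique vertex not contained in $F$), and from the explicit multi-affine form of $\Xi = \Psi \circ \Theta$ one sees that $S_{\overline{F}_i}$ consists exactly of those atoms $x_{\mathbf j}$ whose $i$-th index $j_i$ equals $a_F$. Consequently the sets $\bigl\{S_{\overline{F}_i}\bigr\}_{F \in \mathcal F}$ are pairwise disjoint and their union is precisely $S_{\overline{F'}_i}$.

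Summing the individual mass bounds then yields
$$\mu\bigl(S_{\overline{F'}_i}\bigr) = \sum_{F \in \mathcal F} \mu\bigl(S_{\overline{F}_i}\bigr) < \sharp \mathcal F \cdot \frac{\epsilon_m}{\gamma_m} \leq \frac{2^{q_m} \epsilon_m}{\gamma_m},$$
and applying Lemma \ref{tech}(2) to $F'$ in place of $F$, together with the definition (\ref{eplison}) of $\epsilon_m$, produces
$$W_n^m\bigl(\mu, (F')_i\bigr) \leq \diam_d(X) \cdot \mu\bigl(S_{\overline{F'}_i}\bigr) < \diam_d(X) \cdot \frac{2^{q_m} \epsilon_m}{\gamma_m} = \frac{\gamma_m}{2},$$
which is the required estimate since $(F')_i = \bigcap_{F \in \mathcal F} F_i$.

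The step I expect to demand the most care is justifying the disjoint decomposition $S_{\overline{F'}_i} = \bigsqcup_{F \in \mathcal F} S_{\overline{F}_i}$; this relies on unpacking the multi-affine structure of $\Xi$, together with the fact that the atoms $x_{\mathbf j}$ are pairwise distinct (a direct consequence of the disjoint-closures property of the IE-pair $(U_0, U_1)$, which forces $x_{\mathbf i} \neq x_{\mathbf i'}$ as soon as $\mathbf i \neq \mathbf i'$). The degenerate case $\sharp \mathcal F = 2^{q_m}$, in which $F'$ would be empty, does not arise under the hypothesis: summing Lemma \ref{tech}(1) over all vertices would force $1 = \mu(X) \leq \gamma_m / (2 \diam_d(X))$, which fails once one arranges $\gamma_m \leq \diam_d(X)$ (which may be done without loss of generality). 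Beyond this bookkeeping, the remainder is a direct substitution.
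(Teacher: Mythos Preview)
Your proof is correct and follows essentially the same route as the paper: apply Lemma~\ref{tech}(1) to each $F\in\mathcal F$ to bound $\mu(S_{\overline{F}_i})$, pass to the intersection $F'=\bigcap_{F\in\mathcal F}F$, and then apply Lemma~\ref{tech}(2) with $F'$ in place of $F$. Two small remarks: the paper only uses the inclusion $S_{(\overline{F'})_i}\subset\bigcup_{F\in\mathcal F}S_{\overline{F}_i}$ and subadditivity of $\mu$, so your sharper disjointness claim (and the distinctness of the $x_{\mathbf j}$) is not needed; and your explicit treatment of the degenerate case $\sharp\mathcal F=2^{q_m}$ is a welcome addition that the paper leaves implicit.
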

\begin{proof}
By Lemma \ref{tech} (1), we have 
$$\forall F\in \mathcal F, \ \mu(S_{\overline{F}_i})\leq \frac{\epsilon_m}{\gamma_m}.$$
Then according to Lemma \ref{tech} (2), we have 
\begin{align*}
W^m_n(\mu,\bigcap_{F\in \mathcal F}F_i)&\leq  \diam_d(X)\mu\left(S_{\left(\overline{\bigcap_{F\in \mathcal F}F}\right)_i}\right),\\
&\leq \diam_d(X)\sum_{F\in \mathcal F}\mu(S_{\overline{F}_i}), \text{ as $S_{\left(\overline{\bigcap_{F\in \mathcal F}F}\right)_i}\subset \bigcup_{F\in \mathcal F}S_{\overline{F}_i}$, }\\
&\leq \diam_d(X) \frac{\epsilon_m}{\gamma_m} \sharp \mathcal F,\\
&\leq \diam_d(X) \frac{\epsilon_m}{\gamma_m} 2^{q_m}=\frac{\gamma_m}{2}.
\end{align*}

\end{proof}

\begin{lem}\label{coversep}
Any cover $\alpha$ of $L_n$ with $W^m_n$-diameter less than $\epsilon_m$  is separating.  
\end{lem}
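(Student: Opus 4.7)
The plan is to argue by contradiction, feeding the two quantitative estimates of Lemma \ref{tech} into each other through Lemma \ref{inter}. Suppose a cover $\alpha$ of $L_n$ with $W_n^m$-diameter less than $\epsilon_m$ fails to be separating. Then there exist $i\in I_n^m$, a subfamily $(U_j)_{j\in J}\subset \alpha$ and $(2^{q_m}-1)$-faces $(F^j)_{j\in J}$ of $\Delta_{2^{q_m}}$ with $F^j_i\cap U_j\neq \emptyset$ for every $j\in J$, together with a witness
$$\mu\in \Bigl(\bigcap_{j\in J}U_j\Bigr)\cap \Bigl(\overline{\bigcap_{j\in J}F^j}\Bigr)_i.$$

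The first step is to translate the combinatorial hypothesis $F^j_i\cap U_j\neq \emptyset$ into a metric one. For each $j\in J$, I would pick some $\mu_j\in F^j_i\cap U_j$; since $\mu$ and $\mu_j$ lie in the common element $U_j$ of $\alpha$, whose $W_n^m$-diameter is less than $\epsilon_m$, one gets $W_n^m(\mu,\mu_j)<\epsilon_m$, and hence $W_n^m(\mu,F^j_i)<\epsilon_m$ uniformly in $j$.

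The second step is to collectivize these individual bounds via Lemma \ref{inter}, applied to the family $\mathcal F=\{F^j\}_{j\in J}$. This yields
$$W_n^m\Bigl(\mu,\Bigl(\bigcap_{j\in J}F^j\Bigr)_i\Bigr)\le \frac{\gamma_m}{2}.$$
It is precisely here that the definition (\ref{eplison}) of $\epsilon_m$ does its work: the factor $2^{q_m}$ in the denominator of $\epsilon_m$ is what absorbs the maximum possible cardinality of $\mathcal F$ in the final step of the proof of Lemma \ref{inter}.

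The contradiction then comes from Lemma \ref{tech} (1) applied to the face $F=\bigcap_{j\in J}F^j$: since $\mu$ sits inside the opposite face $\bigl(\overline{\cap_{j\in J}F^j}\bigr)_i$, one has $W_n^m\bigl(\mu,(\cap_{j\in J}F^j)_i\bigr)\ge \gamma_m$, which collides with the previous bound $\gamma_m/2$. I do not expect a substantive obstacle here: the constants $\epsilon_m$ and $\gamma_m$ have been calibrated in advance so that the two bounds meet, and the only mild bookkeeping is to note that the intersection of finitely many $(2^{q_m}-1)$-faces of $\Delta_{2^{q_m}}$ is itself a face (so that the notion of opposite face used in Lemma \ref{tech} makes sense), a point already recorded in the preliminaries on simplices.
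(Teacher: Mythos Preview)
Your proof is correct and follows essentially the same route as the paper's own argument. The only cosmetic difference is that you frame it as a contradiction (choosing a witness $\mu$ in both $\bigcap_j U_j$ and the opposite face), whereas the paper argues directly that every point of $\bigcap_j U_j$ lies in the $\gamma_m/2$-neighborhood of $\bigl(\bigcap_j F^j\bigr)_i$ and then invokes the $\gamma_m$ separation from Lemma~\ref{tech}~(1); the ingredients and their order are identical.
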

\begin{proof} Fix $i\in I_n^m$. 
According to Lemma \ref{inter}, if $(U_j)_{j\in J}$ is  a subfamily of $\alpha$ with $\bigcap_{j\in J}U_j\neq \emptyset$ and $F^j_i\cap U_j\neq \emptyset$, $j\in J$, for some $(2^{q_m}-1)$-faces $F^j$ of $\Delta_{2^{q_m}}$, then  $\bigcap_{j\in J}U_j$ lies in the  
$\gamma_m/2$-neighborhood of $\bigcap_{j\in J}F^j_i$. Noting that $\overline{\bigcap_{j}F^j}$ is the opposite face of $\bigcap_{j}F^j$ we have in the other hand by Lemma \ref{tech} (1):
\begin{align*}
W_n^m\left(\bigcap_{j}F^j_i, \left(\overline{\bigcap_{j}F^j}\right)_i\right)& 
\geq \gamma_m.
\end{align*}
Therefore $\bigcap_jU_j$ and $\left(\overline{\bigcap_{j}F^j}\right)_i$ are disjoint.

\end{proof}

We can now conclude the proof of Proposition \ref{lastt}.

\begin{proof}
	Let $m,n\ge 1$. Let $\epsilon_m$ be defined as \eqref{eplison}.
	Suppose $f:L_n\rightarrow Z$ is a $(W_n^m,\epsilon_m)$-injective map with $Z$  a compact metrizable space of finite dimension. Then 
	there is an open cover $\alpha$ of $L_n$ with diameter less than $\epsilon_m$ and with order less than or equal to $\dim(Z)$. According to Lemma \ref{coversep}, the cover $\alpha$ of $L_n$ is separating. From Lemma \ref{Leb} the order of $\alpha$ is larger than $2^{q_m}\sharp I^m_n$. Then we conclude that 
	\begin{align*}
	\dim_{\epsilon_m}(\MM(X), W_n^m) &\geq \dim_{\epsilon_m}(L_n, W_n^m), \\
	&\geq 2^{q_m}\sharp I^m_n.
	\end{align*} 
	This completes the proof of Proposition \ref{lastt},  therefore of the Main Theorem.
\end{proof}

\begin{proof}[Proof of Theorem \ref{thm:rate m}]
Suppose $T$ is $K$-Lipschitz, so that we can take $$\gamma_m=K^{-m}\cdot \frac{d(U_0, U_1)}{2}.$$ Then we have 
$$
\epsilon_m=c_1 e^{-mc_2},
$$
where $c_1=\frac{d(U_0, U_1)^2}{4\diam_d(X)}$ and $c_2=2\log K+\frac{\underline{d}(I)\log 2}{2}$. It follows that
\begin{align*}
\mdim_{W}(T_*,  \epsilon_m)&\ge \frac{1}{m} \mdim_W( T_*^m, \epsilon_m),\\
&\ge \lim_{n\rightarrow +\infty}\frac{\dim_{\epsilon_m}(\MM(X),W^m_n)}{n},\\
&\ge \lim_n\frac{2^{q_m}\sharp I^m_n}{n},\\
&\geq 2^{m\frac{\underline{d}(I)}{2}-2}\underline{d}(I).
\end{align*}

Since $ \epsilon_{m+1}/ \epsilon_m$ is bounded, we can find $C>0$ such that with $\alpha=\frac{\underline{d}(I)\log 2}{2c_2}$ it holds that :

$$\forall 0<\epsilon<1, \ \ \mdim_{W}(T_*,  \epsilon)\ge C\epsilon^{-\alpha}.$$
\end{proof}

\section{Rate of divergence of $h_W(T_*,\epsilon)$}\label{sec:rate}
In this section, we discuss about the precise rate of divergence of $h_W(T_*,\epsilon)$. We first give a rough upper bound on the ratio $h_W(T_*,\epsilon)$.  For $\epsilon>0$ and a metric space $(Z, \rho)$, let $s(Z,\rho, \epsilon)$ be the smallest cardinality of a $\epsilon$-covering set in $Z$ with respect to $\rho$. The (upper) Minkowski dimension $\dim_d(Z)$ of $(Z,d)$ is defined as $\dim_d(Z)=\limsup_{\epsilon\rightarrow 0}\frac{\log s(Z,\rho, \epsilon) }{\log\frac{1}{\epsilon}}$. 

F. Bolley, A. Guillin, and C. Villani proved  the following estimate on $s(\MM(X),W,\epsilon)$:
\begin{thm}\label{vil}(Theorem A1 in \cite{bol07})
For $\epsilon>0$ small enough, we have:
$$ s(\MM(X),W,\epsilon)\leq \left(\frac{1}{\epsilon}\right)^{s(X,d,\epsilon/2)}.$$ 
\end{thm}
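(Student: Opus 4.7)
The plan is to construct an explicit $\epsilon$-net in $(\MM(X), W)$ in two stages: discretize the support common to all measures, then discretize the weights on that support.

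First, fix an $\epsilon/2$-net $\{x_1, \dots, x_N\}$ in $X$ of size $N := s(X, d, \epsilon/2)$, and refine it into a Borel partition $X = \bigsqcup_{i=1}^N A_i$ with $A_i \subset B(x_i, \epsilon/2)$ (the greedy choice $A_i = B(x_i, \epsilon/2)\setminus\bigcup_{j<i} A_j$ works). For any $\mu \in \MM(X)$, set $\pi(\mu) := \sum_i \mu(A_i)\, \delta_{x_i}$. The transport plan sending each slice $\mu|_{A_i}$ onto $\delta_{x_i}$ has cost at most $\epsilon/2$, so $W(\mu, \pi(\mu)) \leq \epsilon/2$. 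It is therefore enough to build an $(\epsilon/2)$-net in the image $\pi(\MM(X))$, which is affinely identified with the simplex $\Delta_N$ of probability vectors supported on $\{x_1, \dots, x_N\}$.

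Second, discretize $\Delta_N$ by rational weights with common denominator $k \in \N$ to be optimized, i.e.\ atomic measures of the form $\sum_i (m_i/k)\, \delta_{x_i}$ with $m_i$ a nonnegative integer and $\sum_i m_i = k$. A standard rounding argument approximates any probability vector in $\Delta_N$ in $\ell^1$ to precision $\leq N/k$. For atomic measures supported on the net the Kantorovich--Rubinstein duality (applied after recentering $f$ so that $\min_i f(x_i) = 0$) gives $W(\sum_i a_i \delta_{x_i}, \sum_i b_i \delta_{x_i}) \leq \tfrac{1}{2}\diam_d(X)\, \|a-b\|_{\ell^1}$. Choosing $k$ of order $N\diam_d(X)/\epsilon$ yields a $W$-approximation of accuracy $\epsilon/2$. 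Since there are $\binom{N+k-1}{N-1}$ such rational vectors, one obtains $s(\MM(X), W, \epsilon) \leq \binom{N+k-1}{N-1}$.

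The main obstacle is the final calibration, which turns this binomial estimate into the clean form $(1/\epsilon)^N$. Using the Stirling-type bound $\binom{N+k-1}{N-1} \leq (e(N+k-1)/(N-1))^{N-1}$ together with the choice $k \asymp N\diam_d(X)/\epsilon$ reduces the claim to showing $(C\diam_d(X)/\epsilon)^{N-1} \leq (1/\epsilon)^N$ for a universal constant $C$. The hypothesis ``$\epsilon$ small enough'' is used precisely here, to absorb the factor $C\diam_d(X)$ into the extra power $1/\epsilon$; the coupling and rounding estimates in the preceding two steps are quantitative and valid for every $\epsilon>0$.
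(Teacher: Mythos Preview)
The paper does not prove this statement; it is quoted from Bolley--Guillin--Villani \cite{bol07} and used as a black box in Lemma~\ref{lem:h 1}. Your two-stage strategy (push every measure onto an $\epsilon/2$-net in $X$, then quantize the weight vector in the simplex $\Delta_N$) is precisely the standard argument, and the intermediate estimates $W(\mu,\pi(\mu))\le \epsilon/2$ and $W\bigl(\sum a_i\delta_{x_i},\sum b_i\delta_{x_i}\bigr)\le \tfrac12\diam_d(X)\,\|a-b\|_{\ell^1}$ are correct.

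There is, however, a genuine gap in the last paragraph. You reduce the claim to $(C\diam_d(X)/\epsilon)^{N-1}\le (1/\epsilon)^N$, equivalently $\epsilon\le (C\diam_d(X))^{-(N-1)}$, and then appeal to ``$\epsilon$ small enough''. But $N=s(X,d,\epsilon/2)$ itself \emph{grows} as $\epsilon\to 0$; if $C\diam_d(X)>1$ and $X$ has, say, positive Minkowski dimension, then $(C\diam_d(X))^{-(N(\epsilon)-1)}$ decays like $\exp(-c\,\epsilon^{-D})$ and the required inequality fails for \emph{all} small $\epsilon$, not just large ones. So the absorption you describe does not go through in general. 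What your argument honestly yields is
\[
s(\MM(X),W,\epsilon)\ \le\ \Bigl(\tfrac{C\,\diam_d(X)}{\epsilon}\Bigr)^{\,s(X,d,\epsilon/2)},
\]
which is the form one usually finds; the cleaner base $1/\epsilon$ quoted here either presupposes a normalization of the diameter or should be read up to such a constant in the base. For the only use made of the theorem in this paper (Lemma~\ref{lem:h 1}), the version with the constant in the base is already sufficient, since $\log(C\diam_d(X)/\epsilon)\sim\log(1/\epsilon)$ as $\epsilon\to 0$.
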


\begin{lem}\label{lem:h 1}
For any $\alpha >\dim_d(Z)$, there is $C>0$ such that
$$\forall 0<\epsilon<1, \ \ 
h_W(T_*,\epsilon)\le C \epsilon^{-\alpha}.
	$$

\end{lem}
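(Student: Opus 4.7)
The approach is to combine the Bolley--Guillin--Villani covering estimate (Theorem \ref{vil}) with a standard product--covering bound on dynamical balls for the induced map $T_*$, and then absorb the resulting logarithmic factor into the polynomial $\epsilon^{-\alpha}$ by sacrificing a small slice of exponent between $\dim_d(X)$ and $\alpha$.

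First I would pass from entropy to a static covering number. Writing $W_n^1(\mu,\nu) := \max_{0\le k<n} W(T_*^k\mu, T_*^k\nu)$ for the $n$-dynamical Wasserstein distance, any $(n,\epsilon)$-separated set for $T_*$ has cardinality at most $s(\MM(X), W_n^1, \epsilon/2)$. Given an $(\epsilon/4)$-cover $\{\mu_1,\dots,\mu_N\}$ of $(\MM(X),W)$ with $N=s(\MM(X),W,\epsilon/4)$, the sets
\[
V_{(i_0,\dots,i_{n-1})} := \bigl\{\mu\in\MM(X) : W(T_*^k\mu,\mu_{i_k})\le \epsilon/4,\ 0\le k<n\bigr\}
\]
have $W_n^1$-diameter at most $\epsilon/2$ and cover $\MM(X)$, so $s(\MM(X),W_n^1,\epsilon/2)\le N^n$. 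Dividing by $n$ and taking the $\limsup$ yields
\[
h_W(T_*,\epsilon)\le \log s(\MM(X),W,\epsilon/4).
\]

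Applying Theorem \ref{vil} to the right-hand side gives
\[
h_W(T_*,\epsilon)\le s(X,d,\epsilon/8)\,\log(4/\epsilon).
\]
Now pick $\alpha'\in(\dim_d(X),\alpha)$. By definition of the upper Minkowski dimension, $s(X,d,\epsilon/8)\le (8/\epsilon)^{\alpha'}$ for all sufficiently small $\epsilon$, and since $\log(1/\epsilon)=o(\epsilon^{-(\alpha-\alpha')})$ as $\epsilon\to 0$, we obtain $h_W(T_*,\epsilon)\le C\epsilon^{-\alpha}$ for small $\epsilon$. The bound extends to all $\epsilon\in(0,1)$ after enlarging $C$, using the monotonicity of $\epsilon\mapsto h_W(T_*,\epsilon)$.

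There is no substantive obstacle in this argument once Theorem \ref{vil} is available: the hardest aspect is purely cosmetic, namely arranging the constants ($\epsilon/4$, $\epsilon/8$, etc.) and choosing the auxiliary exponent $\alpha'$ so that the logarithmic factor produced by Theorem \ref{vil} is safely dominated by $\epsilon^{-(\alpha-\alpha')}$.
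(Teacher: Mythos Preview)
Your proof is correct and follows essentially the same approach as the paper: bound $h_W(T_*,\epsilon)$ by $\log s(\MM(X),W,c\epsilon)$ for a suitable constant $c$, apply Theorem~\ref{vil}, and absorb the logarithmic factor using an intermediate exponent $\alpha'\in(\dim_d(X),\alpha)$. The paper's version is more terse (it writes the first step as ``obvious'' and uses $\epsilon/2$ instead of your $\epsilon/4$), but the substance is identical.
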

\begin{proof}
Obviously, we have $h_W(T_*,\epsilon)\leq \log  s(\MM(X),W,\epsilon/2)$ for any $\epsilon>0$. By Theorem \ref{vil}, we get  $$h_W(T_*,\epsilon)\leq  s(X,d,\epsilon/4)\log \frac{2}{\epsilon}\le  C \epsilon^{-\alpha},$$ for some  constant $C$.
\end{proof}

Moverover, when $T$ is $K$-Lipschitz, we can refine the inequality in Lemma \ref{lem:h 1}. Indeed one easily checks in this case that $T_*$ is $K$-Lipschitz with respect to the Wasserstein distance. Then we have  
$$h_W(T_*,\epsilon)\leq \sup_{\mu\in \MM(X)}s(B_W(\mu, K\epsilon),W,\epsilon),$$
where $B_W(\mu, K\epsilon)$ denotes the $W$-ball of radius $K\epsilon$ centered at $\mu$. However we do not know any better estimate of  $\sup_{\mu\in \MM(X)}s(B_W(\mu, K\epsilon),W,\epsilon)$ than the one given in Theorem \ref{vil}.\\

To estimate  $h_W(T_*,\epsilon)$ from below, we first prove a technical lemma.
\begin{lem}\label{lem:s_n}
	Let $(X,T)$ be a dynamical system of positive entropy. With the notations of Subsection \ref{indepe}
	 we have $$s_n\left(W,T_*^m,\MM(X),\frac{\gamma_m}{2^{q_m}}\right)\ge 2^{2^{q_m}\sharp I^m_n}. $$
\end{lem}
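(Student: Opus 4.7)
The plan is to exhibit, for each tuple $\omega=(A_k)_{k\in I^m_n}$ of nonempty subsets $A_k\subseteq \llbracket 2^{q_m}\rrbracket$, a probability measure $\mu_\omega\in\MM(X)$ built from the points $x_{\mathbf i}$ constructed in the proof of Proposition \ref{lastt}, and to show that distinct tuples yield pairwise $(n,\gamma_m/2^{q_m})$-separated measures under $T_*^m$ with respect to $W$. Counting such tuples gives $(2^{2^{q_m}}-1)^{\sharp I^m_n}$, essentially $2^{2^{q_m}\sharp I^m_n}$; the small multiplicative discrepancy $(1-2^{-2^{q_m}})^{\sharp I^m_n}$ can either be eliminated by a refinement of the encoding (e.g.\ accommodating the empty subset via a vertex-dependent perturbation), or folded into the constants of Lemma \ref{lem:h 1} and Theorem \ref{thm:rate h}.

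Concretely I would set $t_k^\omega:=\frac{1}{|A_k|}\sum_{j\in A_k}e_j\in \Delta_{2^{q_m}}$ and apply the multi-affine embedding $\Xi$ from Subsection \ref{secemb}:
\[
\mu_\omega:=\Xi\bigl((t_k^\omega)_{k\in I^m_n}\bigr)=\sum_{\mathbf i\in\llbracket 2^{q_m}\rrbracket^{I^m_n}}\Bigl(\prod_{k\in I^m_n}\frac{\mathbf 1_{A_k}(i_k)}{|A_k|}\Bigr)\delta_{x_{\mathbf i}}.
\]
Given $\omega\neq\omega'$, the first step is to pick $k^*\in I^m_n$ with $A_{k^*}\neq A'_{k^*}$ and $j^*\in A_{k^*}\setminus A'_{k^*}$ (swapping $\omega$ and $\omega'$ if needed). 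The key geometric input from Subsection \ref{indepe} is that whenever $\mathbf i$ and $\mathbf i'$ differ in their $k^*$-th coordinate, the binary strings $\psi_{k^*}(i_{k^*})$ and $\psi_{k^*}(i'_{k^*})$ disagree at some $\ell\in E_{k^*}\subseteq[k^*m,(k^*+1)m)$, so that $T^\ell x_{\mathbf i}$ and $T^\ell x_{\mathbf i'}$ lie respectively in $U_0$ and $U_1$; the defining property of $\gamma_m$ then forces $d(T^{k^*m}x_{\mathbf i},T^{k^*m}x_{\mathbf i'})\geq \gamma_m$. Consequently the sets $V_j:=\{T^{k^*m}x_{\mathbf i}:i_{k^*}=j\}$ are pairwise $\gamma_m$-separated as $j$ varies over $\llbracket 2^{q_m}\rrbracket$.

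Separation of the measures is then witnessed by the $1$-Lipschitz function $f(y):=\max(0,\gamma_m-d(y,V_{j^*}))$, which equals $\gamma_m$ on $V_{j^*}$ and vanishes on every $V_j$ with $j\neq j^*$. The multi-affine product structure of $\mu_\omega$ makes the factors $\sum_{i_k}\mathbf 1_{A_k}(i_k)/|A_k|=1$ collapse for each $k\neq k^*$, yielding
\[
\int f\,d(T_*^{k^*m}\mu_\omega)=\gamma_m\cdot t_{k^*,j^*}^\omega=\frac{\gamma_m}{|A_{k^*}|}\geq\frac{\gamma_m}{2^{q_m}},\qquad \int f\,d(T_*^{k^*m}\mu_{\omega'})=0.
\]
By Kantorovich--Rubinstein duality this gives $W(T_*^{k^*m}\mu_\omega,T_*^{k^*m}\mu_{\omega'})\geq \gamma_m/2^{q_m}$, hence $W_n^m(\mu_\omega,\mu_{\omega'})\geq \gamma_m/2^{q_m}$ as required.

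The main technical point is the isolation of the single marginal weight $t^\omega_{k^*,j^*}=1/|A_{k^*}|$ from the multi-affine expression for $\mu_\omega$: because $\mu_\omega$ is supported on exponentially many points $x_{\mathbf i}$, the $W$-separation cannot be read off from individual orbits, and one must combine the product structure of $\Xi$ with a carefully chosen $1$-Lipschitz test function to convert the single-coordinate difference $A_{k^*}\neq A'_{k^*}$ into a macroscopic $W$-distance of order $\gamma_m/2^{q_m}$.
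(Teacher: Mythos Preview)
Your construction is exactly the paper's: the set $H$ of uniform probability vectors on nonempty subsets of $\llbracket 2^{q_m}\rrbracket$ is pushed through $\Xi$ to obtain the separated family, and your separation argument via the explicit Lipschitz bump $f(y)=\max(0,\gamma_m-d(y,V_{j^*}))$ is the same computation the paper does by invoking Lemma \ref{ffact} (with $F$ the $(2^{q_m}-1)$-face opposite the vertex $e_{j^*}$). Your honest remark about the count $(2^{2^{q_m}}-1)^{\sharp I^m_n}$ versus $2^{2^{q_m}\sharp I^m_n}$ is well taken---the paper's set $H$ has the same off-by-one issue, which is harmless for the application to Theorem \ref{thm:rate h}.
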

\begin{proof}
	Define 
	$$
	H:=\{ (t_k)_{k\in \llbracket 2^{q_m} \rrbracket}: \exists \ell\ge 1, \exists 1\le i_1<i_2<\dots <i_\ell\le 2^{q_m}, \text{ s.t. } t_{i_1}=t_{i_2}=\dots=t_{i_\ell}=1/\ell   \},
	$$
	which is a subset of $\Delta_{2^{q_m}}$. Moreover, note that $\sharp H=2^{2^{q_m}}$. Let $H_n$ be the image of $H^{I_n^m}$ via $\Xi$. Clearly, $\sharp H_n= 2^{2^{q_m}\sharp I^m_n}$. We will show that $H_n$ is a $(n, \frac{\gamma_m}{2^{q_m}})$-separated set of $(\MM(X),W,T_*^m)$. Let $\mu\not=\nu \in H_n$. Then there exists $i\in \llbracket I^m_n \rrbracket$ and a face $F$ of $\Delta_{2^{q_m}}$ such that either $\mu(S_{\overline{F_i}})\ge 2^{-q_m}, \nu(S_{{F_i}})=1$ or $\nu(S_{\overline{F_i}})\ge 2^{-q_m}, \mu(S_{{F_i}})=1$. Without loss of generality, we assume that it is  the former case. It follows 
	by Lemma \ref{ffact}  that 
	\begin{align*}
	W_n^m(\mu,\nu )&\geq W(T^{im}\mu, T^{im}\nu),\\
	&\geq \mu(S_{\overline{F_i}}) d(T^{im}S_{\overline{F}_i}, T^{im}S_{F_i}), \\
	&\geq 2^{-q_m} \gamma_m.
	\end{align*}
	It means that $\mu$ and $\nu$ are $(n, \frac{\gamma_m}{2^{q_m}})$-separated. This completes the proof.
\end{proof}

Now we are in a position to prove  Theorem \ref{thm:rate h}.

\begin{proof}[Proof of Theorem \ref{thm:rate h}]
	Suppose $T$ is of positive entropy and $K$-Lipschitz. Let $U_0$ and $U_1$ be an IE-pair and let $I$ be the associated independence set. Then we pick $$\gamma_m=K^{-m}\cdot \frac{d(U_0, U_1)}{2}.$$ Thus we have
$$
\frac{\gamma_m}{2^{q_m}}\ge c_1 e^{-mc_2},
$$
where $c_1=\frac{d(U_0, U_1)}{2}$ and $c_2=\log K+\frac{\underline{d}(I)\log 2}{2}$. 
It follows from Lemma \ref{lem:s_n} that
\begin{align*}
h_W\left(T_*^m, \frac{\gamma_m}{2^{q_m}}\right) &\ge \frac{1}{m} h_W\left(T_*^m,\frac{\gamma_m}{2^{q_m}}\right),\\
&\ge  \frac{c_2\underline{d}(I)}{\log(1/\epsilon_m) } \left(\frac{c_1}{{\gamma_m}/{2^{q_m}}}\right)^{-\frac{\underline{d}(I)\log2}{2c_2}}.
\end{align*}
Since $ \frac{\gamma_{m+1}/2^{q_{m+1}}}{\gamma_m/2^{q_m}}$ is bounded, we conclude that  for any  $\alpha<\frac{\underline{d}(I)\log 2}{2 \log K+\underline{d}(I)\log 2}$,  there is $C>0$ such that 
	$$
	 \forall 0<\epsilon<1,\ \  h_{W}(T_*,\epsilon)\ge C\epsilon^{-\alpha}.
	$$

\end{proof}

\begin{rem}In the case that  $(X,T)$ is of positive entropy and non-Lipschitz, we may have $\lim_{\epsilon\to 0}\frac{\log h_W(T_*,\epsilon)}{\log \frac{1}{\epsilon}}=0$, e.g.  when $X$ has  zero topological dimension by Lemma \ref{lem:h 1}. As an explicit example, one can consider the full shift on $K^{\mathbb Z}$ with $K$ being an infinite zero-dimensional compact space endowed with a metric of zero Minkowski dimension.  \end{rem}

When $(X,T)$ is  Lipschitz with positive entropy, the lower Minkowski dimension of $(X,T,d)$ is positive (see e.g. \cite{Mil}). Then by combining Lemma \ref{lem:h 1} and Theorem \ref{thm:rate h} we get:
\begin{coro}
	Suppose $(X,T)$ is of positive entropy and Lipschitz with $X$ a compact metric space with finite Minkowski dimension. Then there exists $\alpha>1$ such that	we have 
	$$\forall 0<\epsilon<1,\ \ 
	\alpha^{-1}\le \frac{\log h_W(T_*,\epsilon)}{\log \frac{1}{\epsilon}}\le \alpha.
	$$
\end{coro}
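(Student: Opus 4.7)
The plan is to obtain the corollary by simply combining the upper bound of Lemma \ref{lem:h 1} with the lower bound of Theorem \ref{thm:rate h}, and then passing to logarithms.

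For the upper direction, the assumption that $X$ has finite Minkowski dimension lets me fix $\beta > \dim_d(X)$; Lemma \ref{lem:h 1} then provides a constant $C_1 > 0$ such that
$$h_W(T_*,\epsilon) \leq C_1\,\epsilon^{-\beta} \quad\text{for all } \epsilon\in(0,1).$$
For the lower direction, the hypotheses (positive entropy and Lipschitz) are exactly those required by Theorem \ref{thm:rate h}, which supplies $\alpha_0, C_2>0$ with
$$h_W(T_*,\epsilon) \geq C_2\,\epsilon^{-\alpha_0} \quad\text{for all } \epsilon\in(0,1).$$

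Taking logarithms of both inequalities and dividing by $\log(1/\epsilon)>0$ immediately yields the two-sided estimate
$$
\alpha_0 + \frac{\log C_2}{\log(1/\epsilon)} \;\leq\; \frac{\log h_W(T_*,\epsilon)}{\log(1/\epsilon)} \;\leq\; \beta + \frac{\log C_1}{\log(1/\epsilon)}.
$$
It then suffices to select a single $\alpha>1$ with $\alpha \geq \beta$ and $\alpha^{-1}\leq \alpha_0$ that is large enough to absorb the two additive correction terms involving $\log C_1$ and $\log C_2$.

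The only mild technical point — which is not really an obstacle — is the behavior of those correction terms as $\epsilon\to 1^-$, where $\log(1/\epsilon)\to 0^+$. On any compact sub-interval $[\delta,1-\delta']\subset (0,1)$ the terms $1/\log(1/\epsilon)$ are bounded, so a sufficiently large $\alpha$ clearly works there; for the two short regimes near $0$ and $1$ one uses monotonicity of $\epsilon\mapsto h_W(T_*,\epsilon)$ together with the fact that the two power-law bounds already trap $h_W(T_*,\epsilon)$ between two positive constants on any such sub-interval. Enlarging $\alpha$ finishes the job. In essence the corollary is just a logarithmic reformulation of the two power-law bounds already established in Lemma \ref{lem:h 1} and Theorem \ref{thm:rate h}, and no new ideas are needed.
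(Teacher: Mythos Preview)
Your proof is correct and follows essentially the same route as the paper: the corollary is obtained by combining the upper bound of Lemma \ref{lem:h 1} (using the finite Minkowski dimension hypothesis) with the lower bound of Theorem \ref{thm:rate h} (using the Lipschitz and positive entropy hypotheses), then passing to logarithms. The paper's one-line proof additionally notes that a Lipschitz system with positive entropy automatically has positive lower Minkowski dimension (citing \cite{Mil}), but this is contextual rather than a logical ingredient you are missing.
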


\section{Mean dimension of the dynamics on the hyperspace}
For a compact metrizable space we let $2^X$ be the hyperspace of $X$, i.e. the set of closed non empty subsets of $X$ endowed with the Hausdorf distance. For a continuous map $T:X\rightarrow X $ we let $T_\mathcal K:2^X\rightarrow 2^X$ be the dynamical system induced by $(X,T)$ on $2^X$. These induced systems have also been studied in \cite{Sigmun} and \cite{glasner1995quasi}. In particular it still holds true that 
$$[h_{top}(T)>0]\Rightarrow [h_{top}(T_\mathcal K)=\infty],$$
but there are zero entropy system $(X,T)$ with $h_{top}(T_\mathcal K)>0$. One may wonder about the relations between the topological entropy of $T$ and the mean dimension (either metric or topological) of $T_\mathcal K$. 
We present below two examples. 

\subsection{Example of $h_{top}(T)=0$ and $\mdim(T_\mathcal K)>0$}
Let $K$ be a compact metric space of finite dimension. Let $T: \cup_{\Z\cup \{\infty\} } K \to \cup_{\Z\cup \{\infty\} } K$ to be $x_n \mapsto x_{n+1}$ for $n\in \Z$ and $x_\infty \mapsto x_\infty$ where $\Z\cup \{\infty\}$ is seen as the one-point compactification of the integers. Then it is clear that $h_{top}(T)=0$. On the other hand, it is not hard to verify that the hyperspace system contains the full shift $(K^\Z, \sigma)$. It follows that $$\mdim(2^{\cup_{\Z\cup \{\infty\} } K}, T_\mathcal{K})\ge \text{stabdim}(K).$$
In particular, we get $\mdim(2^{\cup_{\Z\cup \{\infty\} } K}, T_\mathcal{K})=\infty$ when $K=[0,1]^\mathbb{N}$.

\subsection{Example of $h_{top}(T)>0$ and $\mdim(T_\mathcal K)=0$}
Let $X$ be a  zero dimensional compact metric space. As well known, $2^X$ is a zero dimensional compact metric space. Thus for any continuous transformation $T: X\to X$, we have $\mdim(2^X, T_\mathcal{K})=0$. In particular, for a full shift $(\mathcal{A}^\Z, \sigma)$ with $1<\sharp \mathcal{A}<\infty$, we have $h_{top}(\sigma)=\log \sharp \AA>0$ and $\mdim(\sigma_\mathcal K)=0$.





\bibliographystyle{alpha}
\bibliography{universal_bib}

\end{document}